\newtheorem{thm}{Theorem}[section]
\newtheorem{cor}[thm]{Corollary}
\newtheorem{lem}[thm]{Lemma}
\newtheorem{prop}[thm]{Proposition}
\newtheorem{exam}[thm]{Example}
\theoremstyle{definition}
\newtheorem{defn}[thm]{Definition}
\newtheorem{rem}[thm]{Remark}
\newtheorem{que}[thm]{Question}
\numberwithin{equation}{section}
\begin{document}
\title[A generalization of the order elements with...]
{A generalization of the order of elements with   some applications}%
\author[  M.Amiri  ]{  Mohsen Amiri}%
\address{ Departamento de  Matem\'{a}tica, Universidade Federal do Amazonas.}%
\email{mohsen@ufam.edu.br}%
\email{}
\subjclass[2010]{20B05}
\keywords{ Finite group, order elements}%
\thanks{}
\thanks{}

\begin{abstract}
In this paper,  we  give some   generalizations   the concept of element order and we study  some of the properties of these  generalized order.  In particular,  with using this generalization we  derive two solvability criteria.
\end{abstract}

\maketitle


\section{\bf Introduction}

For a finite group, the order of group and the element orders are two of the most important basic
concepts. Let $x$  be an element of a finite group $G$.  A natural question arises: {\it what can we say about the order of the elements of  the coset $xN$ when $N$ is not a normal subgroup}?
In general, it  is hard to get much information about the order of the elements of the coset.
For this reason  we    give a generalization of   the order of elements that  just  depends   on subgroups of $G$.  With using this generalization we  derive two solvability criteria.
 In what follows, we adopt the notation established in the Isaacs' book on finite groups  \cite{I}.


\section{Generalized element orders}\label{sec2}
In this section we shall generalize the concept of element order by defining two types of element order  $``o"$  and $``E"$.  We shall also study their basic properties.

First we recall the following well-known properties of the usual order of an element $x$ in a group $G$.
 \begin{thm}
Let $G$  be a finite    group,  $N$ a normal subgroup of $G$ and $x\in G$. Then the following three properties concerning the order of elements are satisfied:

(i) $o(xN)\mid [G:N]$,

(ii) $o(x^gN)=o(xN)$ for all $ g\in G$,

(iii) $o(xnN)=o(xN)$ for all $n\in N$.
\end{thm}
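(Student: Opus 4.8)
The plan is to interpret $o(xN)$ as the order of the element $xN$ in the quotient group $G/N$ and then deduce all three assertions from elementary facts about element orders. The only structural input needed is that, because $N \trianglelefteq G$, the set of cosets $G/N$ is a group under $(aN)(bN)=abN$, it has order $[G:N]$, and the canonical projection $\pi\colon G\to G/N$, $\pi(a)=aN$, is a surjective homomorphism. With this in hand each part is immediate.

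For (i), apply Lagrange's theorem to the finite group $G/N$: the order of any element divides the order of the group, so $o(xN)\mid \abs{G/N}=[G:N]$. For (ii), use that $\pi$ is a homomorphism, which gives $x^gN=\pi(g^{-1}xg)=\pi(g)^{-1}\pi(x)\pi(g)=(gN)^{-1}(xN)(gN)=(xN)^{gN}$; since conjugation by a fixed element of $G/N$ is an automorphism of $G/N$, it preserves orders, hence $o(x^gN)=o(xN)$. For (iii), note that $n\in N$ forces $nN=N$, so $xnN=(xN)(nN)=xN$ as elements of $G/N$, and therefore $o(xnN)=o(xN)$ trivially.

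The only thing to be careful about is the bookkeeping in identifying the symbol $o(xN)$ with the order of $xN$ in $G/N$ rather than with something computed inside $G$; once that convention is fixed there is no genuine obstacle, and the three properties follow from Lagrange's theorem and the fact that conjugation preserves order. This theorem is recalled only as a benchmark: the substance of the section is to reproduce analogues of (i)--(iii) for the generalized notions $``o"$ and $``E"$, which must work without the normality hypothesis that makes the present argument routine.
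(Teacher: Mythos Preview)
Your argument is correct and is exactly the standard one: interpret $o(xN)$ as the order of $xN$ in $G/N$, then invoke Lagrange for (i), the fact that conjugation is an automorphism for (ii), and $xnN=xN$ for (iii). The paper itself gives no proof of this theorem; it is merely recalled as a well-known benchmark before introducing the generalized orders, so there is nothing to compare against.
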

A natural question arises: {\it what can we say about the order of the elements of  the coset $xN$ when $N$ is not a normal subgroup}?
In general, it  is hard to get much information about the order of the elements of the coset.
For this reason  we    give a generalization of   the order of elements that  depends   on subgroups of $G$. To do so, we begin with the following observation. For each subgroup $N\leq G$ and element  $x\in G$ we write $n(x, N)$ for the smallest positive  integer such that $x^{n(x, N)}\in N$.
It is clear that $\langle x^{n(x, N)}\rangle=\langle x\rangle \cap N \leq \langle x\rangle,$ and
$n(x, N)= [\langle x\rangle:\langle x\rangle \cap N]$. As the number $n(x,N)$ will be used in defining the generalized element orders, we first study some of its properties.
\begin{lem}\label{1} Let $N\leq M \leq G$ be a sequence of subgroups of $G$ such that $[G:N]<\infty$, $x, y, z\in G$ and $m$ a positive integer. Then, the following are true

(i) $n(x^m, N)=\dfrac{n(x, N)}{gcd(n(x, N), m)}$.

(ii) If $x=yz=zy$, then $n(yz, N)\mid lcm(n(y, N), n(z, N))$.
Also, $n(yz, N)= n(y, N)n(z, N)$  if and only if $gcd(n(y, N), n(z, N))=1$.

(iii)
$n(x, N)=n(x, M) n(x^{n(x, M)}, N)$.
 \end{lem}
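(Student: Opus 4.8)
The plan is to first record the one elementary fact that makes all three statements almost formal. \emph{Claim:} for every $w\in G$, every subgroup $K\le G$ for which $n(w,K)$ is defined, and every integer $j\ge 1$, one has $w^j\in K$ if and only if $n(w,K)\mid j$. Indeed, if $w^j\in K$, write $j=n(w,K)t+s$ with $0\le s<n(w,K)$; then $w^s=w^j\bigl(w^{n(w,K)}\bigr)^{-t}\in K$, and minimality of $n(w,K)$ forces $s=0$, i.e. $n(w,K)\mid j$, while the converse is immediate. Note that all the symbols $n(\cdot,\cdot)$ appearing in the lemma are well defined, since $n(w,N)=[\langle w\rangle:\langle w\rangle\cap N]\le [G:N]<\infty$, and, because $N\le M$, also $[G:M]<\infty$.

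For (i), since $(x^m)^k=x^{mk}$, the Claim gives $(x^m)^k\in N\iff n(x,N)\mid mk$, and the least positive $k$ with $n(x,N)\mid mk$ is exactly $n(x,N)/\gcd(n(x,N),m)$; hence this equals $n(x^m,N)$. For (iii), put $p=n(x,M)$ and $q=n(x^p,N)$ (which is defined, as $(x^p)^{n(x,N)}=(x^{n(x,N)})^p\in N$). Then $x^{pq}=(x^p)^q\in N$, so $n(x,N)\mid pq$ by the Claim. Conversely, set $r=n(x,N)$; since $x^r\in N\le M$, the Claim applied to $M$ gives $p\mid r$, say $r=ps$, and then $(x^p)^s=x^r\in N$ gives $q\mid s$, so $pq\mid ps=r$. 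The two divisibilities yield $n(x,N)=pq$, as asserted.

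For (ii), write $a=n(y,N)$, $b=n(z,N)$, $L=\operatorname{lcm}(a,b)$. As $y,z$ commute, $(yz)^L=y^Lz^L$, and $a\mid L$, $b\mid L$ give $y^L,z^L\in N$ by the Claim, so $(yz)^L\in N$ and $n(yz,N)\mid L$. If $\gcd(a,b)=d>1$, then $n(yz,N)\mid L=ab/d<ab$, so $n(yz,N)\ne ab$; this is one direction of the equivalence. For the other, assume $\gcd(a,b)=1$ and let $k=n(yz,N)$, so $y^kz^k=(yz)^k\in N$. Raising to the $b$-th power and using commutativity, $(y^kz^k)^b=y^{kb}z^{kb}\in N$; moreover $z^{kb}=(z^b)^k\in N$ since $z^b\in N$, so $y^{kb}=(y^kz^k)^b(z^{kb})^{-1}\in N$, and the Claim gives $a\mid kb$, hence $a\mid k$ because $\gcd(a,b)=1$. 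By symmetry $b\mid k$, so $ab\mid k$; together with $k\mid L=ab$ this forces $k=ab=n(y,N)n(z,N)$, completing (ii). The only step requiring more than bookkeeping with the Claim is this forward implication: extracting both $a\mid k$ and $b\mid k$ from the single membership $y^kz^k\in N$. The device of raising to the coprime powers $b$ and $a$ and peeling off $z^{kb}$ (resp. $y^{ka}$) is what makes it work, and I expect that to be the main — indeed only — obstacle.
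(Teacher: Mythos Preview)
Your proof is correct. For parts (i) and (ii) it follows essentially the same route as the paper --- both hinge on the division-algorithm fact you isolate as your Claim --- though your execution of (ii) is tidier: you raise $(y^kz^k)$ to the coprime power $b$ and peel off $z^{kb}\in N$ directly, whereas the paper writes $y^{n(x,N)}=z^{-n(x,N)}h$ for some $h\in N$, checks that $h$ commutes with $z$, and then raises to the power $n(y,N)$ to reach the same conclusion. For part (iii) the paper takes a genuinely different path: it uses the identity $\langle x^{n(x,N)}\rangle=\langle x\rangle\cap N$ together with the element-order formula, computing
\[
\frac{o(x)}{n(x,N)}=o\bigl(x^{n(x,N)}\bigr)=o\bigl(z^{n(z,N)}\bigr)=\frac{o(x)}{n(x,M)\,n(x^{n(x,M)},N)}
\]
with $z=x^{n(x,M)}$. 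That argument tacitly assumes $o(x)<\infty$, while your two-sided divisibility argument ($n(x,N)\mid pq$ and $pq\mid n(x,N)$ via $N\le M$) works under the stated hypothesis $[G:N]<\infty$ alone, so it is both shorter and slightly more general.
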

\begin{proof}

(i) First we prove that if $n(x^m, N)=1$, then $n(x, N)\mid m$.
There are positive integers $r$ and $q$ such that $m=qn(x,N)+r$ with $0\leq r<n(x,N)$.
Since $x^m$ and $x^{-q\cdot n(x, N)}\in N$, we have $x^r\in N$. Hence $r=0.$

Let $d=gcm(n(x, N), m)$. It follows from $(x^m)^{\frac{n(x, N)}{d}}=(x^{\frac{m}{d}})^{n(x, N)}\in N$
that $n(x^m, N)\mid \frac{n(x, N)}{d}$.
Since $(x^m)^{n(x^m, N)}\in N$, we conclude that
$n(x, N)\mid m\cdot n(x^m, N)$.
Therefore $\frac{n(x, N)}{d}\mid \frac{m}{d} n(x^m, N)$.
Since $gcd\Big( \frac{n(x, N)}{d}, \frac{m}{d}\Big)=1$, we have
 $\frac{n(x, N)}{d}\mid  n(x^m, N)$.

(ii) Let $m=lcm(n(y, N), n(z, N))$, and let $x=yz$. We compute
$$x^m=(z^{n(z, N)})^{\frac{n(y, N)}{gcd(n(y, N), n(z, N))}}(y^{n(y, N)})^{\frac{n(z, N)}{gcd(n(y, N), n(z, N))}}\in N,$$ which means that $n(x, N)\mid m$.

Now, suppose  that  $n(yz, N)=  n(y, N) n(z, N)$.  It follows from the first part that
 $n(x, N)\mid m$. Therefore  $gcd(n(y, N), n(z, N))=1$.
 
 So, let $gcd(n(y, N), n(z, N))=1$. We prove that $n(yz, N)=  n(y, N)  n(z, N)$.
 From the first part $n(x, N)\mid m$, so $n(x, N)\mid n(y, N)n(z, N)$.
 Since  $x^{n(x, N)}=y^{n(x, N)}z^{n(x, N)}\in N$, there exists $h\in N$, such that
  $y^{n(x, N)}=z^{-n(x, N)}h$. Since $zy=yz$, we have $z^{-n(x, N)}hz=zz^{-n(x, N)}h$, and so
  $zh=hz$.
 Since  $(y^{n(x, N)})^{n(y, N)}\in N$, we deduce that  $(z^{-n(x, N)})^{n(y, N)}h^{n(y, N)}\in N$, and so
 $(z^{-n(x, N)})^{n(y, N)}\in N$. It follows that
 $n(z, N)\mid n(x, N)n(y, N)$. As
  $gcd(n(y, N), n(z, N))=1$, we have
  $n(z, N)\mid n(x, N)$.  By a similar argument to the one
   above, we have
   $n(y, N)\mid n(x, N)$. Then
   $ n(y, N) n(z, N)\mid n(x, N)$. It follows that  $ n(x, N)= n(y, N)n(z, N)$.

(iii)
Let $z=x^{n(x, M)}$.  We have
$\langle x^{n(x, N)}\rangle=\langle z^{n(z, N)}\rangle$. It follows that
\begin{eqnarray*}
\frac{o(x)}{n(x, N)}&=&o( x^{n(x, N)})\\&=&o(z^{n(z, N)})\\&=&\frac{o(z)}{n(z, N)}\\&=&\frac{o(x)}{n(x, M) n(x^{n(x, M)}, N)}. 
\end{eqnarray*}
Hence $n(x, N)=n(x, M) n(z, N_t)=n(x, M) n(x^{n(x, M)}, N)$.
\end{proof}

We are now in a position to define the orders $``o"$, $``e"$ and $``E"$.
\begin{defn}\label{def23} Let $N\subseteq G$  such that $1\in N$. Let $x\in G$ such that $$ \{n(x^g,N): g\in G\}=\{n(x^{g_1}, N),...,n(x^{g_n},N)\}$$ where $n$ is a positive integer and $g_1,...,g_n\in G$.
 We define the order $``o"$ of the element $x$ with respect to the subset $N$ by
$$o(x, N)=gcd\Big( n(x^{g_1}, N), ... , n(x^{g_n}, N)\Big).$$

\end{defn}

Notice that Definition \ref{def23} also holds in the case when $G$ is an infinite group, for example,
whenever $N$ is a subgroup of $G$ and $[G:N]<\infty$.

\begin{defn}\label{def2}Consider a  finite group $G$. Let $N \subseteq M$  be   subsets of $G$.

We define the order $``e"$ of the element $x\in G$ with respect to the above subsets   by
$$e(x, M)=o(x, M)$$ \, and \,
 $$e(x, N)=e(x, M)\cdot   o(x^{e(x, M)}, N).$$
\end{defn}

\begin{defn}
Furthermore, if $N\leq M$  be   subgroups of $G$, then we define the order $``E"$ of the element $x\in G$ with respect to the above subgroups   by
$$E(x, M)=o(x, M)$$ \, and \,
 $$E(x, N)=E(x, M)\cdot  gcd\Big([M:N], o(x^{E(x, M)}, N)\Big).$$
\end{defn}

In the reminder of this section we will study some implications of these definitions. Among other things, we shall prove by the end of this section that when  $|G|<\infty$ and $N\leq G$   then
$o(x, N)=E(x, N)=e(x, N)$.   Notice that the latter equality of element orders shows that the definition of $E(x, N)$ only depends on $x$ and the subgroup $N$,   and order $x$ is equal  to     the order of  $x^g$ for all $g\in G$. Henceforth always assume that $G$ is a finite group and $N\leq M$  are   subgroups of $G$.

First we  show  that the $E$-order is indeed a generalization of the usual element order.
 
\begin{lem}\label{min} Let  $x\in G$ and let $p$ abe a prime divisor of $o(x)$ such that $gcd(o(x),p^{m+1})=p^m$. Let   $  o(x^{E(x, M)}, N) =p^{\alpha}v$ where
$gcd(p, v)=1$.
   Then
 $p^{\alpha}\mid [M:N]$.
\end{lem}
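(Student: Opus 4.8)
The plan is to reduce the statement to a single well-chosen conjugate of $x$ and then to a Sylow-theoretic inequality inside $M$. First I would record two elementary divisibilities: $E(x,M)=o(x,M)=\gcd_{g\in G}n(x^g,M)$ divides every $n(x^g,M)$, and $n(x^g,M)\mid n(x^g,N)$ because $\langle x^g\rangle\cap N\le\langle x^g\rangle\cap M$ inside the cyclic group $\langle x^g\rangle$; hence $E(x,M)\mid n(x^g,N)$ for all $g$. Applying Lemma~\ref{1}(i) to $y=x^g$ with exponent $E(x,M)$ gives $n\big((x^{E(x,M)})^g,N\big)=n\big((x^g)^{E(x,M)},N\big)=n(x^g,N)/E(x,M)$, so that $o(x^{E(x,M)},N)=\big(\gcd_{g\in G}n(x^g,N)\big)/E(x,M)$. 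Writing $v_p$ for the exponent of $p$, we get $\alpha=v_p\big(o(x^{E(x,M)},N)\big)=\min_{g}v_p(n(x^g,N))-v_p(E(x,M))$, so it suffices to exhibit one $g\in G$ with $v_p(n(x^g,N))\le v_p(E(x,M))+v_p([M:N])$.

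Next I would translate everything into the subgroup lattice. For any subgroup $H\le G$ one has $n(x^g,H)=[\langle x^g\rangle:\langle x^g\rangle\cap H]=o(x)/|\langle x^g\rangle\cap H|$, and since $v_p(o(x))=m$ by hypothesis this gives $v_p(n(x^g,H))=m-v_p(|\langle x^g\rangle\cap H|)$. Put $\mu:=\max_{g\in G}v_p(|\langle x^g\rangle\cap M|)$, so that $v_p(E(x,M))=v_p(o(x,M))=\min_g v_p(n(x^g,M))=m-\mu$. The required inequality then becomes: find $g\in G$ with $v_p(|\langle x^g\rangle\cap N|)\ge\mu-v_p([M:N])$. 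If $\mu\le v_p([M:N])$ any $g$ works, so we may assume $\mu>v_p([M:N])$.

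For the main step, choose $g_1\in G$ with $v_p(|\langle x^{g_1}\rangle\cap M|)=\mu$ and let $Y$ be the Sylow $p$-subgroup of the cyclic group $\langle x^{g_1}\rangle\cap M$; thus $Y$ is a cyclic $p$-subgroup of $M$ of order $p^{\mu}$. Fix $R\in\mathrm{Syl}_p(M)$ with $Y\le R$. A standard Sylow argument inside $M$ produces $n\in M$ with $R^n\cap N\in\mathrm{Syl}_p(N)$ (take $Q\in\mathrm{Syl}_p(N)$, enlarge it to a Sylow $p$-subgroup of $M$, which is $R^n$ for some $n\in M$; then $Q\le R^n\cap N$ and the latter is a $p$-subgroup of $N$, so they coincide), whence $[R^n:R^n\cap N]=|R|/|Q|=p^{v_p([M:N])}$. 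Since $Y^n\le R^n$, the inclusion gives $[Y^n:Y^n\cap N]=[Y^n:Y^n\cap(R^n\cap N)]\le[R^n:R^n\cap N]=p^{v_p([M:N])}$, and as $Y^n$ is cyclic of order $p^{\mu}$ this forces $|Y^n\cap N|\ge p^{\mu-v_p([M:N])}$. Finally $Y^n\le\langle x^{g_1}\rangle^n=\langle x^{g_1n}\rangle$, so $v_p(|\langle x^{g_1n}\rangle\cap N|)\ge v_p(|Y^n\cap N|)\ge\mu-v_p([M:N])$; taking $g=g_1n$ completes the argument and shows $p^{\alpha}\mid[M:N]$.

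The index manipulations and the appeal to Lemma~\ref{1}(i) are routine; the real content, and the step I expect to be the main obstacle, is the last paragraph. One has to see that $\alpha$ measures the gap between how large a cyclic $p$-subgroup of a conjugate of $\langle x\rangle$ can be forced into $N$ versus into $M$, and then control that gap by replacing a Sylow $p$-subgroup $R$ of $M$ with a conjugate $R^n$ that meets $N$ in a full Sylow $p$-subgroup of $N$; cyclicity of $Y^n$ is exactly what turns the crude index bound $[R^n:R^n\cap N]=p^{v_p([M:N])}$ into the needed lower bound on $|Y^n\cap N|$. A small but essential point is that the favourable intersection must be recognised as lying inside a conjugate of $\langle x\rangle$, not merely inside $M$, so that it genuinely contributes to $v_p(|\langle x^{g}\rangle\cap N|)$.
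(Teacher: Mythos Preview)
Your proof is correct and follows essentially the same route as the paper: reduce via Lemma~\ref{1}(i) to finding a single conjugate $x^{g}$ whose cyclic $p$-part meets $N$ in a subgroup of order at least $p^{\mu-v_p([M:N])}$, and then produce that conjugate by a Sylow argument inside $M$ (placing the cyclic $p$-group and a Sylow $p$-subgroup of $N$ inside a common Sylow $p$-subgroup of $M$). Your write-up is in fact cleaner than the paper's, which contains a typo (the ratio $n(x^g,M)/n(x^g,N)$ should be inverted) and leaves the identification of the two $\alpha$'s implicit.
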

\begin{proof} Let $o(x)=p^mw $  where
$gcd(p,w)=1$. If $M=N$, then $\alpha=0$, and  so  the proof is clear.
So suppose that $M\neq N$.

 Let  $\frac{n(x^g, M)}{n(x^g,N)}=p^{\alpha_g}s_g$ where $gcd(p,s_g)=1$ for all $g\in G$. Let  $\alpha=\min\{\alpha_g : g\in G\}$. We may assume that $\alpha=\alpha_1$.    Let
 $y=x^{n(x,M)}$, and let $o(y)=p^d r$ where $p\nmid r$.  Since all Sylow $p$-subgroups of $M$ are conjugated,
there are $P\in Syl_p(M)$ and  $Q\in Syl_p(N)$ such that  $\langle y^{o(y)/p^d}\rangle Q^g\subseteq P$ for some $g\in M$.
Since $\alpha=\min\{\alpha_g : g\in G\}$, we have $|\langle y\rangle \cap Q^g|=p^{d-\alpha}$.  It follows that $|P|\geq |\langle y\rangle Q|= |Q| p^{\alpha}$ and hence $p^{\alpha}\mid [P:Q]$. It follows  from $[P:Q]\mid [M:N]$ that $p^{\alpha}\mid [M:N]$.
\end{proof}
\begin{cor}\label{eqal} Let  $x\in G$  be a finite    group.
Then $$E(x, N_t)=o(x,N_t).$$
\end{cor}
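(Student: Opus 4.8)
The plan is to use Lemma \ref{min} to show that the $\gcd$ with $[M:N]$ occurring in the definition of $E(x,N)$ removes nothing, thereby reducing the claim to an identity among the $o$-orders alone, and then to settle that identity using Lemma \ref{1}. So fix subgroups $N\leq M\leq G$ and $x\in G$, and write $k=o(x,M)=E(x,M)$ and $d=o(x^{k},N)$. First I would observe that every prime divisor of $d$ divides $o(x)$: indeed $d$ divides $n(x^{k},N)$, which divides $o(x^{k})$, which divides $o(x)$. Hence, for an arbitrary prime $p$, writing $d=p^{\alpha_p}v_p$ with $p\nmid v_p$, Lemma \ref{min} gives $p^{\alpha_p}\mid[M:N]$ — the hypothesis $p\mid o(x)$ being automatic when $p\mid d$, and when $p\nmid o(x)$ one has $\alpha_p=0$ so the conclusion is trivial. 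Since the prime powers $p^{\alpha_p}$ are pairwise coprime and each divides $[M:N]$, their product $d$ divides $[M:N]$, so $\gcd\big([M:N],d\big)=d$ and the definition of $E$ yields
$$E(x,N)=E(x,M)\cdot\gcd\big([M:N],o(x^{E(x,M)},N)\big)=k\cdot d=o(x,M)\cdot o(x^{o(x,M)},N).$$

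It then remains to prove the elementary identity $o(x,M)\cdot o(x^{o(x,M)},N)=o(x,N)$. The key point is that $k=o(x,M)$ divides $n(x^{g},N)$ for every $g\in G$: applying Lemma \ref{1}(iii) with $x$ replaced by $x^{g}$ gives $n(x^{g},N)=n(x^{g},M)\,n\big((x^{g})^{n(x^{g},M)},N\big)$, hence $n(x^{g},M)\mid n(x^{g},N)$, while $k=\gcd\{n(x^{h},M):h\in G\}$ divides $n(x^{g},M)$. Now $(x^{k})^{g}=(x^{g})^{k}$, so Lemma \ref{1}(i) together with $k\mid n(x^{g},N)$ gives $n\big((x^{k})^{g},N\big)=n(x^{g},N)/\gcd(n(x^{g},N),k)=n(x^{g},N)/k$ for every $g\in G$. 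Taking the $\gcd$ over $g\in G$ and pulling out the common factor $k$,
$$o(x^{k},N)=\gcd_{g\in G}\frac{n(x^{g},N)}{k}=\frac{1}{k}\gcd_{g\in G}n(x^{g},N)=\frac{o(x,N)}{k},$$
and therefore $E(x,N)=k\cdot o(x^{k},N)=o(x,N)$, as claimed. (If $M=N$ the statement is immediate from the definition of $E$, so no separate case is needed.)

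The real content here is Lemma \ref{min}; granting it, the only mild obstacle is the bookkeeping in the first paragraph, namely assembling the prime‑by‑prime divisibility $p^{\alpha_p}\mid[M:N]$ into $d\mid[M:N]$, which is why one first records that $d\mid o(x)$ so that $d$ has no prime factors outside those of $o(x)$. After that, everything is a direct computation with the quantities $n(\cdot,\cdot)$ via the two parts of Lemma \ref{1}.
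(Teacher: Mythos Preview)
Your proof is correct and follows essentially the same route as the paper's: first use Lemma~\ref{min} to remove the $\gcd$ with $[M:N]$ from the definition of $E(x,N)$, then use Lemma~\ref{1}(i) together with the divisibility $E(x,M)\mid n(x^{g},N)$ to collapse $E(x,M)\cdot o(x^{E(x,M)},N)$ to $o(x,N)$. The only difference is expository: you spell out the prime-by-prime assembly of $d\mid[M:N]$ from Lemma~\ref{min} (and justify $n(x^{g},M)\mid n(x^{g},N)$ via Lemma~\ref{1}(iii)), whereas the paper simply cites Lemma~\ref{min} for the first step and states the divisibility $n(x^{g},M)\mid n(x^{g},N)$ directly.
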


\begin{proof}Let $G=\{g_1,...,g_n\}$.
If $M=G$, then by definition $E(x,N)=o(x,N)$. So suppose that $M\neq N$. By Lemma \ref{min},
$E(x,N)=E(x,M)o(x^{E(x,M)},N)$. 
Clearly, 
 $$E(x, M)=gcd( n(x^{g_1}, M),..., n(x^{g_n}, M))\mid n(x^{g}, M),$$
for all $g\in G$. Let $g\in G$. Since $n(x^{g}, M)\mid n(x^{g}, N)$ and $E(x,M)\mid n(x^g,M)$, we have $E(x, M)\mid  n(x^{g}, N)$.  By Lemma \ref{1}(i),  $n((x^{g})^{E(x, M)}, N_t)=\frac{n(x^{g}, N)}{E(x, M)}$
 for all $g\in G$. Then
 \begin{eqnarray*}
E(x, N)&=&E(x, M)gcd( n((x^{g_1})^{E(x, M)}, N),..., n((x^{g_n})^{E(x, M)}, N))\\&=&
 E(x, M)(gcd( n(x^{g_1}, N),..., n(x^{g_n}, N))/E(x, M))\\&=&
 gcd( n(x^{g_1}, N),..., n(x^{g_n}, N))\\&=&o(x, N).
 \end{eqnarray*}
\end{proof}

 Let  $x\in G$ and $d$ a positive integer. Then we have $o(x^d)=\frac{o(x)}{gcd(d, o(x))}$. In what follows we will prove the same equation for the order of $x$ with respect to subgroup $N$.
 The following  number theory lemma is useful  to prove Lemma \ref{dd}.
\begin{lem}\label{numb}
Let $d,a_1,a_2,...,a_k$ be   positive integers with $k>1$.  If $gcd(d,gcd(a_1,...,a_k))=1$, then
$gcd(a_1,...,a_k)=gcd(\frac{a_1}{gcd(d,a_1)},...,\frac{a_k}{gcd(d,a_k)})$.
 \end{lem}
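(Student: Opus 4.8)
The plan is to reduce the $k$-variable statement to the two-variable case $\gcd(a_1,a_2)=\gcd\!\big(\tfrac{a_1}{\gcd(d,a_1)},\tfrac{a_2}{\gcd(d,a_2)}\big)$ whenever $\gcd(d,\gcd(a_1,a_2))=1$, and then induct on $k$. For the induction step, write $g=\gcd(a_1,\dots,a_{k-1})$ and $g'=\gcd(a_1,\dots,a_k)=\gcd(g,a_k)$; since $\gcd(d,g')=1$ one checks $\gcd(d,g)=1$ as well (any common prime of $d$ and $g$ would have to divide $a_k$ too, because... actually it need not — so instead I would argue directly at the level of prime valuations, see below), apply the inductive hypothesis to get $g=\gcd\!\big(\tfrac{a_1}{\gcd(d,a_1)},\dots,\tfrac{a_{k-1}}{\gcd(d,a_{k-1})}\big)$, and then apply the two-variable case to $g$ and $a_k$. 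The cleanest route, though, avoids this bookkeeping entirely.

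Concretely, I would prove the statement by comparing $p$-adic valuations for each prime $p$. Fix a prime $p$ and let $e_i=v_p(a_i)$ and $\delta=v_p(d)$. The hypothesis $\gcd(d,\gcd(a_1,\dots,a_k))=1$ says that for every prime $p$ we have $\min(\delta,e_1,\dots,e_k)=0$, i.e. either $\delta=0$ or $\min_i e_i=0$. On the right-hand side, $v_p\!\big(\tfrac{a_i}{\gcd(d,a_i)}\big)=e_i-\min(\delta,e_i)$, and we must show
\[
\min_i\big(e_i-\min(\delta,e_i)\big)=\min_i e_i.
\]
If $\delta=0$ this is immediate since $\min(0,e_i)=0$. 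If instead $\min_i e_i=0$, say $e_j=0$, then the $j$-th term on the left is $0-\min(\delta,0)=0$, so the left side is $0=\min_i e_i$. Hence equality of valuations holds for every prime, which gives the desired equality of integers.

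The main obstacle here is essentially notational rather than mathematical: one has to be careful that the hypothesis is a statement about $\gcd(d, \gcd(a_1,\dots,a_k))$ — the gcd with the \emph{joint} gcd — and not the weaker-looking (but actually non-equivalent in the wrong direction) condition $\gcd(d,a_i)=1$ for all $i$. The valuation reformulation $\forall p:\ \min(v_p(d),\min_i v_p(a_i))=0$ makes this transparent and reduces the whole lemma to the trivial case analysis above. I would present the proof exactly in this form: translate both sides to $p$-adic valuations, note that the hypothesis forces $v_p(d)=0$ or $\min_i v_p(a_i)=0$ for each $p$, dispose of the two cases in one line each, and conclude. The hypothesis $k>1$ plays no essential role beyond ensuring the right-hand gcd has at least two arguments, and the argument works verbatim for any $k\ge 1$.
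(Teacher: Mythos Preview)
Your proof via $p$-adic valuations is correct. The paper takes a different, more elementary route: setting $m=\gcd(a_1,\dots,a_k)$, it shows directly that each side divides the other. Since $\frac{a_i}{\gcd(d,a_i)}\mid a_i$ for every $i$, any common divisor of the numbers $\frac{a_i}{\gcd(d,a_i)}$ is a common divisor of the $a_i$ and hence divides $m$; conversely, $\gcd(d,m)=1$ forces $\gcd(\gcd(d,a_i),m)=1$, and together with $m\mid a_i$ this gives $m\mid\frac{a_i}{\gcd(d,a_i)}$ for each $i$, so $m$ divides the right-hand gcd. Your valuation argument has the advantage of making the hypothesis completely transparent (it becomes the dichotomy $\delta=0$ or $\min_i e_i=0$ at each prime) and of treating all $k$ uniformly without any induction; the paper's argument has the virtue of avoiding unique factorization altogether and working purely with divisibility. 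Your side remark that the naive induction on $k$ stumbles --- because $\gcd(d,\gcd(a_1,\dots,a_{k-1}))$ need not equal $1$ even when $\gcd(d,\gcd(a_1,\dots,a_k))=1$ --- is correct, and it is good that you noticed it and switched to the valuation argument.
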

\begin{proof}
Let $m=gcd(a_1,...,a_k)$. Clearly, $gcd(\frac{a_1}{gcd(d,a_1)},...,\frac{a_k}{gcd(d,a_k)}))\mid m$.
Since $gcd(d,m)=1$, we have $gcd(gcd(d,a_1),m)=...=gcd(gcd(d,a_k),m)=1$.
Since $m\mid a_i$, we have $m\mid \frac{a_i}{gcd(d,a_i)}$ for all $i=1,2,...,k$.
It follows that $m\mid gcd(\frac{a_1}{gcd(d,a_1)},...,\frac{a_k}{gcd(d,a_k)})$.
\end{proof}
Let $d,a_1,a_2,...,a_k$ be   positive integers with $k>1$.
The following equality is a  direct  consequence of Lemma \ref{numb}:
    $$\frac{gcd(a_1,...,a_k)}{gcd(d,gcd(a_1,...,a_k))}=gcd(\frac{a_1}{gcd(d,a_1)},...,\frac{a_k}{gcd(d,a_k)}).$$

\begin{lem}\label{dd}

  Let $x\in G$, and let $d$ be  a  positive integer. Then  $E(x^d, N)= \frac{E(x, N)}{gcd(d, E(x, N))}$.
  In particular, $E(x^{E(x, N)}, N)=1$.

 \end{lem}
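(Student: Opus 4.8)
The plan is to reduce the statement for $E$ to the corresponding statement for $o$, using Corollary \ref{eqal}, which tells us that $E(y,N)=o(y,N)$ for every $y\in G$. Thus it suffices to prove $o(x^d,N)=\dfrac{o(x,N)}{\gcd(d,o(x,N))}$. First I would fix a transversal-style enumeration $G=\{g_1,\dots,g_n\}$ and recall from Definition \ref{def23} that $o(x,N)=\gcd\bigl(n(x^{g_1},N),\dots,n(x^{g_n},N)\bigr)$ and likewise $o(x^d,N)=\gcd\bigl(n((x^d)^{g_1},N),\dots,n((x^d)^{g_n},N)\bigr)$. Since $(x^d)^g=(x^g)^d$, Lemma \ref{1}(i) gives $n((x^d)^{g_i},N)=\dfrac{n(x^{g_i},N)}{\gcd(d,n(x^{g_i},N))}$ for each $i$. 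So the right object to analyze is $\gcd_i\Bigl(\dfrac{n(x^{g_i},N)}{\gcd(d,n(x^{g_i},N))}\Bigr)$.

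Next I would invoke the number-theoretic identity recorded just before Lemma \ref{dd}, namely
$$\frac{\gcd(a_1,\dots,a_k)}{\gcd(d,\gcd(a_1,\dots,a_k))}=\gcd\Bigl(\frac{a_1}{\gcd(d,a_1)},\dots,\frac{a_k}{\gcd(d,a_k)}\Bigr),$$
which is exactly the consequence of Lemma \ref{numb}. Applying it with $a_i=n(x^{g_i},N)$ converts the right-hand side into $\dfrac{o(x,N)}{\gcd(d,o(x,N))}$, and combined with the previous paragraph this yields $o(x^d,N)=\dfrac{o(x,N)}{\gcd(d,o(x,N))}$. Translating back through $E=o$ finishes the main formula.

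The one subtlety is that Lemma \ref{numb} is stated with the hypothesis $\gcd(d,\gcd(a_1,\dots,a_k))=1$, whereas here $d$ is arbitrary; but the displayed corollary to Lemma \ref{numb} is stated without that hypothesis (it holds in general, since one can replace $d$ by $\gcd(d,m)$ and check both sides are unchanged, or simply cite the displayed equation as given). There is also the trivial case $k=1$, i.e. $|G|=1$, which is immediate and can be dispatched in a line; and one should note $\gcd(d,n)$ in Lemma \ref{1}(i) matches $\gcd(d,o(x^{g_i})\cap\text{stuff})$ only after observing $n(x^{g_i},N)\mid o(x^{g_i})=o(x)$, which is automatic. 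I do not expect any real obstacle here — the work was front-loaded into Lemma \ref{1}(i), Lemma \ref{numb}, and Corollary \ref{eqal}.

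For the "in particular" clause, set $d=E(x,N)$. Then $\gcd(d,E(x,N))=E(x,N)$, so the formula gives $E(x^{E(x,N)},N)=\dfrac{E(x,N)}{E(x,N)}=1$, as claimed.
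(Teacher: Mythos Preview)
Your proposal is correct and follows essentially the same route as the paper: express $E(x^d,N)$ via Corollary~\ref{eqal} as $\gcd_i n((x^{g_i})^d,N)$, apply Lemma~\ref{1}(i) to each term, and then invoke the displayed consequence of Lemma~\ref{numb} to collapse the gcd. The paper's proof is the same four-line computation, just with the identification $E=o$ left implicit; your remarks on the hypothesis of Lemma~\ref{numb} and the trivial case are reasonable side comments but not needed for the argument as written.
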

\begin{proof} Let $G=\{g_1,...,g_m\}$. We have
\begin{eqnarray*}
E(x^d,N)&=&gcd(n(x^{g_1})^{ d}, N),..., n(x^{g_m})^{d}, N))\\&=&
gcd(\frac{n(x^{g_1}, N)}{gcd(d, n(x^{g_1}, N))},...,\frac{n(x^{g_m}, N)}{gcd(d, n(x^{g_m}, N))})\\&=&
\frac{gcd(n(x^{g_1}, N),...,n(x^{g_m}, N))}{gcd(d,gcd(n(x^{g_1}, N),...,n(x^{g_m}, N)))}\\&=&
\frac{E(x, N)}{gcd(d,E(x, N))}.
\end{eqnarray*}

\end{proof}

\begin{cor}\label{hal}   Let $x\in G$,  $\pi$ a subset of all prime divisors of $|G|$.
 If all Hall $\pi$-subgroups of $G$ are conjugated and $N$ is a Hall $\pi$-subgroup of $G$, then $E(x,N)=o(x^{|N|})$.
 \end{cor}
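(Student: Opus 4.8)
The plan is to reduce the statement to a repeated application of Lemma \ref{min} together with Corollary \ref{eqal} and Lemma \ref{dd}, exploiting the hypothesis that all Hall $\pi$-subgroups are conjugate. Since $N$ is a Hall $\pi$-subgroup, $|N|$ is a $\pi$-number and $[G:N]$ is a $\pi'$-number. The first step is to observe that $o(x^{|N|})=\frac{o(x)}{\gcd(|N|,o(x))}$ is exactly the $\pi'$-part of $o(x)$: raising $x$ to the power $|N|$ kills the $\pi$-part of the order and leaves the $\pi'$-part untouched. So the target is to show $E(x,N)$ equals the $\pi'$-part of $o(x)$.

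The second step uses Corollary \ref{eqal} to rewrite $E(x,N)=o(x,N)=\gcd\big(n(x^{g_1},N),\dots,n(x^{g_n},N)\big)$ where $G=\{g_1,\dots,g_n\}$. For each $g$, recall $n(x^g,N)=[\langle x^g\rangle:\langle x^g\rangle\cap N]=[\langle x\rangle:\langle x\rangle\cap N^{g^{-1}}]$ after conjugating; since $N^{g^{-1}}$ is again a Hall $\pi$-subgroup of $G$, the intersection $\langle x\rangle\cap N^{g^{-1}}$ is a $\pi$-subgroup of the cyclic group $\langle x\rangle$, hence is contained in the (unique) Sylow-type $\pi$-part of $\langle x\rangle$, which has order $\gcd(|N|,o(x))=$ the $\pi$-part of $o(x)$. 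Therefore each $n(x^g,N)$ is divisible by the $\pi'$-part of $o(x)$, so their gcd is too; this gives the divisibility $\pi'\text{-part of }o(x)\mid E(x,N)$.

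For the reverse divisibility I would show that some conjugate achieves equality: because all Hall $\pi$-subgroups are conjugate, there is a Hall $\pi$-subgroup $H$ of $G$ containing the entire $\pi$-part of the cyclic group $\langle x\rangle$ (indeed, $\langle x\rangle$'s $\pi$-part is a $\pi$-subgroup of $G$, hence lies in some Hall $\pi$-subgroup, which equals $N^{g^{-1}}$ for a suitable $g$). For that $g$ we get $\langle x^g\rangle\cap N=\langle x^g\rangle_\pi$, so $n(x^g,N)$ equals precisely the $\pi'$-part of $o(x)$. Hence the gcd defining $o(x,N)$ is at most, and therefore equal to, the $\pi'$-part of $o(x)$, which is $o(x^{|N|})$. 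An alternative, more mechanical route: apply Lemma \ref{min} with $M=G$ to bound the $p$-part of $o(x^{E(x,G)},N)=o(x,N)$ by the $p$-part of $[G:N]$ for each prime $p$; since $[G:N]$ is a $\pi'$-number this forces $E(x,N)$ to be a $\pi'$-number dividing $o(x)$, and combined with the divisibility from the previous paragraph pins it down.

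The main obstacle is the conjugacy bookkeeping in the reverse inequality: one must be careful that "the $\pi$-part of $\langle x\rangle$ lies in a common Hall $\pi$-subgroup with $N$" really follows from the conjugacy hypothesis and that conjugating $x$ rather than $N$ produces the right index $n(x^g,N)$. Once that is set up, everything else is the elementary arithmetic of $\pi$- and $\pi'$-parts already packaged in Lemma \ref{1}(i) and Lemma \ref{dd}.
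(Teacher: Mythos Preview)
Your alternative route is correct and is essentially the paper's argument, just packaged differently. The paper writes $x=yz=zy$ with $y$ the $\pi$-part and $z$ the $\pi'$-part, then uses Lemma~\ref{dd} twice together with the fact $E(x,N)\mid [G:N]$ (hence $\gcd(E(x,N),|N|)=1$) to get
\[
E(x,N)=E\big(x^{|N|},N\big)=E\big(z^{|N|},N\big)=E(z,N),
\]
and finishes by observing $\langle z^{g}\rangle\cap N=1$ for every $g$, so $n(z^{g},N)=o(z)$ and $E(z,N)=o(z)=o(x^{|N|})$. Your alternative route reaches the same two ingredients (``$E(x,N)$ is a $\pi'$-number'' and ``the $\pi'$-part of $o(x)$ divides every $n(x^{g},N)$'') directly from Lemma~\ref{min} and the index formula, without the $yz$-decomposition; the content is the same.

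Your \emph{first} route, however, has exactly the gap you flagged as ``the main obstacle'': the step ``the $\pi$-part of $\langle x\rangle$ is a $\pi$-subgroup, hence lies in some Hall $\pi$-subgroup'' is the property~$D_\pi$, and it is \emph{not} a formal consequence of the hypothesis that all Hall $\pi$-subgroups are conjugate (property~$C_\pi$). So that route would need an additional argument even for cyclic $\pi$-subgroups. The paper sidesteps this entirely, and in fact neither the paper's proof nor your alternative route ever uses the conjugacy hypothesis: the conclusion $E(x,N)=o(x^{|N|})$ holds for any Hall $\pi$-subgroup $N$, conjugate or not.
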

\begin{proof}

   Let $x=yz=zy$ where all prime divisor of $o(y)$  belong to $\pi$ and   $o(z)=o(x^{|N|})$.
   We have  $E((yz)^{|N|},N)=E(z^{|N|},N)$.
   By Lemma  \ref{dd},
   \begin{eqnarray*}
 \frac{E(yz,N)}{gcd(E(yz, N), |N|)}&=& E((yz)^{|N|},N)\\&=&E(z^{|N|},N)\\&=&\frac{E(z, N)}{gcd(E(z, N), |N|)}\\&=&E(z, N)
   \end{eqnarray*}
    because
   $gcd(o(z), |N|)=1$.  Since $E(yz, N)\mid [G:N]$, we have $gcd(E(yz, N), |N|)=1$.
 Since  $\langle z^{u}\rangle \cap N=1$ for all $u\in G$, we have
$n(z^{gu}, N)=o(z^{u})=o(z)$. Then $E(x, N)=o(z)=o(x^{|N|})$.
\end{proof}

Let $G$ $d$ be a divisor of $[G:N]$. Let $L(d, N)=\{x\in G: E(x^d, N)=1\}$.
The following Proposition  shows that the value of $E(x, N)$ is depend on $o(x)$ and  Sylow subgroups of $N$.
\begin{prop}\label{equ}  Let  $|N|=p_1^{\alpha_1}...p_k^{\alpha_k}$, and let  $P_i\in Syl_{p_i}(N)$. Then

(i) for all $x\in G,$ we have  $$E(x, N)= gcd(E(x, P_1),...,E(x, P_k))$$

(ii) if $N$ is a $p$-group, then $L(d,N) = \{x \in G : x^d\in N^G\};$

(iii) $$L(d, N)=\bigcap_{i=1}^kL(d[N:P_i], P_i).$$
 \end{prop}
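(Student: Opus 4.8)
The plan is to establish the three parts in the order (i), then (ii), then (iii), using (i) as the main structural tool and (ii) as the key computational input for (iii).

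\medskip

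\textbf{Part (i).} I would first observe that for any subgroup $N$ with prime factorization $|N| = p_1^{\alpha_1}\cdots p_k^{\alpha_k}$ and any element $x \in G$, the number $n(x^g, N) = [\langle x^g\rangle : \langle x^g\rangle \cap N]$ factors over the primes according to how $\langle x^g\rangle \cap N$ sits inside its Sylow subgroups. More precisely, for a cyclic group $C = \langle x^g\rangle$, the intersection $C \cap N$ has order $\prod_i p_i^{\beta_{i,g}}$ where $p_i^{\beta_{i,g}} = |C \cap P_i'|$ for a suitable Sylow $p_i$-subgroup $P_i'$ of $N$ (all conjugate, so this is well-defined on sizes), and $C \cap P_i$ for a fixed $P_i \in \mathrm{Syl}_{p_i}(N)$ has the same size up to conjugating $x^g$. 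The upshot is $n(x^g, N) = \prod_i n(x^g, P_i)_{p_i}\cdot(\text{prime-to-}p_i\text{ part of }o(x^g))$ — I would package this as: the $p_i$-part of $n(x^g,N)$ equals the $p_i$-part of $n(x^g,P_i)$, while the $p_i'$-part of $n(x^g,P_i)$ equals the $p_i'$-part of $o(x^g)$, which is the full $p_i'$-part. Taking $\gcd$ over all $g \in G$ commutes with taking $p$-parts, and by Corollary \ref{eqal} we have $E(x,N) = o(x,N) = \gcd_g n(x^g,N)$ and likewise $E(x,P_i) = \gcd_g n(x^g,P_i)$. Matching prime-by-prime: the $p_i$-part of $E(x,N)$ is the $p_i$-part of $E(x,P_i)$, and that is also the $p_i$-part of $\gcd_j E(x,P_j)$ since every other $E(x,P_j)$ ($j\neq i$) contributes the full $p_i$-part of $o(x)$. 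Hence $E(x,N) = \gcd(E(x,P_1),\dots,E(x,P_k))$.

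\medskip

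\textbf{Part (ii).} With $N$ a $p$-group, I want to show $E(x^d,N) = 1 \iff x^d \in N^G := \bigcup_{g\in G} N^g$. By Lemma \ref{dd}, $E(x^d,N) = 1$ iff $E(x,N) \mid d$, and unwinding Corollary \ref{eqal}, $E(x,N) = \gcd_{g\in G} n(x^g,N)$. So $E(x^d,N)=1$ iff $n((x^g)^d,N) = 1$ for the particular $g$ achieving... no — rather, $E(x^d,N) = \gcd_g n((x^g)^d, N) = \gcd_g n((x^d)^g, N)$, and since each $n((x^d)^g,N) \geq 1$, this $\gcd$ equals $1$ iff $n((x^d)^g, N) = 1$ for \emph{some} $g$, i.e. $(x^d)^g \in N$ for some $g$, i.e. $x^d \in N^{g^{-1}} \subseteq N^G$. (Here I use that $N$ is a $p$-group only to know $n((x^d)^g,N)=1$ forces $(x^d)^g\in N$ — actually that holds for any subgroup, so the $p$-group hypothesis may only be needed for cleanliness or for (iii); I would double-check whether it is essential or can be dropped, but in any case the argument above handles (ii).)

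\medskip

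\textbf{Part (iii).} This combines (i) and (ii). By (i), for any $x$ and any divisor $d$ of $[G:N]$, $E(x^d,N) = \gcd(E(x^d,P_1),\dots,E(x^d,P_k))$, so $x \in L(d,N)$ iff $E(x^d,P_i) = 1$ for all $i$. Now I need to relate $E(x^d,P_i)=1$ to membership in $L(d[N:P_i],P_i)$, i.e. to $E(x^{d[N:P_i]},P_i) = 1$. By Lemma \ref{dd}, $E(x^{d[N:P_i]},P_i) = E(x,P_i)/\gcd(d[N:P_i], E(x,P_i))$ and $E(x^d,P_i) = E(x,P_i)/\gcd(d,E(x,P_i))$. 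Since $E(x,P_i) \mid [G:P_i]$ and $P_i$ is a Sylow $p_i$-subgroup of $N$, the relevant observation is that $[N:P_i]$ is prime to $p_i$, while $E(x,P_i)$'s "new" contribution beyond what divides $d$ must be examined. The key point: $E(x^d,P_i) = 1$ iff $E(x,P_i)\mid d$; and $E(x^{d[N:P_i]},P_i)=1$ iff $E(x,P_i)\mid d[N:P_i]$. These are \emph{not} obviously equivalent in general, so the real content is showing $E(x,P_i) \mid d[N:P_i] \iff E(x,P_i)\mid d$ for the $x$ in question — which should follow because the $p_i$-part of $E(x,P_i)$ behaves well and $[N:P_i]$ contributes only primes $\neq p_i$, combined with using (ii) for the $p_i$-group $P_i$ to reinterpret the condition as $x^{d[N:P_i]}\in P_i^G$ versus $x^d \in P_i^G$, and these coincide since raising to a power coprime to... here I expect the main obstacle. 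I would carefully track that $x^{d[N:P_i]} \in P_i^G$ implies $x^d$ has $p_i$-part conjugate into $P_i$ (because $[N:P_i]$ is prime to $p_i$, raising to that power is a bijection on $p_i$-elements), hence $x^d \in P_i^G$ in the relevant sense; the converse is immediate. Threading this equivalence through gives $L(d,N) = \bigcap_i L(d[N:P_i],P_i)$.

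\medskip

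\textbf{Expected main obstacle.} The delicate point is part (iii): matching the index $d[N:P_i]$ against $d$ when passing between $N$ and its Sylow subgroup $P_i$. One must argue that the extra factor $[N:P_i]$ (prime to $p_i$) does not change which elements satisfy the vanishing condition for the $p_i$-group $P_i$, which requires combining the prime-decomposition from (i), the power formula from Lemma \ref{dd}, and the conjugacy reinterpretation from (ii) — handling the interplay of the $p_i$-part and $p_i'$-part correctly is where care is needed.
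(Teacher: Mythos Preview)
There is a genuine gap in your argument for part (ii), and it is exactly at the step where you dismiss the $p$-group hypothesis. You write that $E(x^d,N)=\gcd_g n((x^d)^g,N)$, ``and since each $n((x^d)^g,N)\geq 1$, this $\gcd$ equals $1$ iff $n((x^d)^g,N)=1$ for \emph{some} $g$.'' That implication is false in general: $\gcd(2,3)=1$ with neither term equal to $1$. What makes it true here is precisely that $N$ is a $p$-group. Writing $x^d=yz=zy$ with $z$ the $p$-part and $y$ the $p'$-part, one has $n((x^d)^g,N)=o(y)\cdot n(z^g,N)$ for every $g$; the second factor is a power of $p$, so the numbers $n((x^d)^g,N)$ are totally ordered by divisibility and their $\gcd$ is the minimum, which is achieved by some $g$ (this is how the paper uses Lemma~\ref{min}). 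Without the $p$-group hypothesis your ``$\gcd=1\Rightarrow$ some term $=1$'' step simply does not follow, so your remark that the hypothesis ``may only be needed for cleanliness'' is wrong: it is the heart of the argument.

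Your plan for (iii) also inherits trouble. You try to prove the equivalence $E(x,P_i)\mid d[N:P_i]\iff E(x,P_i)\mid d$, but this is not true in general: $E(x,P_i)$ divides $[G:P_i]$ and can certainly have prime factors in common with $[N:P_i]$. The paper avoids this entirely and does not use (ii) in the proof of (iii). Instead it uses the relation $E(x,P_i)=E(x,N)\,r_i$ with $r_i\mid [N:P_i]$ (from the proof of (i)) to get $L(d,N)\subseteq\bigcap_i L(d[N:P_i],P_i)$ directly, and for the reverse inclusion observes that $E(x^{d[N:P_i]},P_i)=1$ gives $E(x^d,P_i)\mid [N:P_i]$ via Lemma~\ref{dd}, whence by (i) $E(x^d,N)=\gcd_i E(x^d,P_i)\mid\gcd_i[N:P_i]=1$. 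This is both shorter and avoids the false equivalence you were aiming for.
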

\begin{proof}
Let  $G=\{g_1,...,g_n\}$.

(i) Clearly, $E(x, N)\mid gcd(E(x, P_1),...,E(x, P_k))$.
We have $E(x, P_i)=E(x, N) r_i$ where $r_i=gcd(n(g_1^{-1}x^{E(x, N)}g_1, P_i),...,n(g_n^{-1}x^{E(x, N)}g_n, P_i)))$.  Since $r_i\mid [N:P_i]$, we have\break
$gcd(r_1,...,r_k)=1$. Then $gcd(E(x, P_1),...,E(x, P_k))\mid E(x, N)$.

(ii) Clearly, $\{u\in G: u^d\in N^G\} \subseteq L(d, N)$.
Let $x\in L(d, N)$. We have
$x=zy=yz$ where $o(z)=p^m$ for some integer $m$ and $p\nmid o(y)$.
We have $$E(x, N)=E(y, N)E(z,N)=o(y)E(z, N).$$
Also,  $E(z, N)=gcd(n(z^{g_1},N),...,n(z^{g_n}, N))$. By Lemma  \ref{min}, there exists
$g\in G$ such that $E(z, N)=n(z^g, N)$. It follows that $$E(x, N)=E(x^g, N)=o(y^g)n(z^g, N)=n(x^g, N).$$
Hence $E(x^d,N)=1$ if and only if $n((x^d)^g, N)=1$ for some $g\in G$.
Then
 $L(d,N)=\{u\in G: u^d\in N^G\}$.

(iii) If $N$ is a $p$-group, then there is nothing to prove.
 So let $k>1$.
Let $x\in \{u\in G: E(u^d, N)=1\}$. We have $E(x, P_i)=E(x, N)r_i$ where $r_i\mid [N:P_i]$.
Then $E(x^{d[N:P_i]}, P_i)=1$.
It follows that  $$L(d, N)\subseteq \bigcap_{i=1}^kL(d[N:P_i],P_i).$$

Now, let $x\in \bigcap_{i=1}^kL(d[N:P_i],P_i)$.
We have $E(x^{d[N:P_i]},P_i)=1$ for all $i=1,2,...,k$.
It follows that $$E(x^d, N)=gcd(E(x^d, P_1),...,E(x^d, P_k))\mid gcd([N:P_1],...,[N:P_k])=1.$$
Then $x\in L(d, N)$, and hence
$$\bigcap_{i=1}^kL(d[N:P_i],P_i)\subseteq L(d, N),$$
as claimed.
\end{proof}

Let     $V$ be a subset of $G$. We denote by
      $S(V,M,N)$ to be the set of all $x\in V$ such that $ E(x,M)=E(x,N)$.
    Write $Cor_G(N)=\bigcap_{g\in G}N^g$.
Then, with the Frobenius conjecture  in mind, the answer  to the following  question  would be interesting know.

\begin{que}Let  $d$ be a divisor of $[G:P]$ where $P\in Syl_p(G)$. Is it true that if 
$|L(d, N)|\leq d|N|$, then $L(d,N)$ is a normal  subgroup of $G$ and $|L(d, N)|=d|N|$.
\end{que}
Note that  by using the following  theorem which is a generalization of Frobenius Theorem, it is easy to show that $|Cor_G(N)|\cdot d \mid  |L(d,N)|$ whenever $d\mid[G:N]$. Also, it is not true $d|N|\mid |L(d,N)|$ in the general case.    

\begin{thm}[P. Hall \cite{hall}]\label{hall}
Let $G$ be a group of order $n$  and let $C$ be a class of $h$
conjugate elements. The number of solutions of $x^d = c$, where $c$ ranges
over $C$, is a multiple of $gcd(hd, n)$.
\end{thm}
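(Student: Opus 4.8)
The plan is to deduce Theorem~\ref{hall} from Frobenius' theorem by first reducing it to a statement about $d$-th roots of a single \emph{central} element, and then dealing with that statement.

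Fix $c_0\in C$ and put $H=C_G(c_0)$, so that $h=[G:H]$ and $|H|=n/h$. If $x^{d}=c_0$ then $x$ commutes with $x^{d}=c_0$, hence $x\in H$; thus every $d$-th root of $c_0$ lies in $H$, and $c_0\in Z(H)$ since $H=C_G(c_0)$. For $c=gc_0g^{-1}\in C$, conjugation by $g$ carries the $d$-th roots of $c_0$ bijectively onto those of $c$; hence, writing $m:=\bigl|\{x\in H:x^{d}=c_0\}\bigr|$, the equation $x^{d}=c$ has exactly $m$ solutions for every $c\in C$, and the total number of solutions of $x^{d}\in C$ equals $N=hm$. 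Since $h\mid n$ we have $\gcd(hd,n)=h\,\gcd(d,n/h)=h\,\gcd(d,|H|)$, so the assertion $\gcd(hd,n)\mid N$ is equivalent to
\[
\gcd(d,|H|)\ \big|\ m=\bigl|\{x\in H:x^{d}=c_0\}\bigr|,\qquad c_0\in Z(H).
\]
Hence it suffices to prove: for every finite group $H$ and every $z\in Z(H)$, $\gcd(d,|H|)$ divides the number of $d$-th roots of $z$ in $H$.

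For $z=1$ this is exactly Frobenius' theorem: every element of $H$ has order dividing $|H|$, so $\{x\in H:x^{d}=1\}=\{x\in H:x^{\gcd(d,|H|)}=1\}$, and $\gcd(d,|H|)\mid|H|$. For a general central $z$ I would argue by induction on $|H|$. A convenient mechanism is to embed $H$ in the group $\widehat H=\bigl(H\times\Z/(d\,o(z))\Z\bigr)\big/\bigl\langle(z,-d)\bigr\rangle$ of order $d\,|H|$, in which $H$ is normal, $\widehat H/H\cong\Z/d\Z$, and $z=w^{d}$ for a central element $w$ mapping to a generator of $\Z/d\Z$; then $x\mapsto xw^{-1}$ identifies the $d$-th roots of $z$ in $H$ with the elements of order dividing $d$ lying in the single coset $Hw^{-1}$. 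One then studies this count by applying Frobenius' theorem inside $\widehat H$, passing to the central quotient $\widehat H/\langle z\rangle$, decomposing each root into its $p$- and $p'$-parts, and invoking the inductive hypothesis; the bijections $y\mapsto y^{k}$ with $\gcd(k,|\widehat H|)=1$ give a transitive action of $(\Z/d\Z)^{*}$ on the relevant cosets, which forces the per-coset counts to coincide.

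The reductions of the first two steps are routine book-keeping. The hard part — and the real content of \cite{hall} — is the last step: Frobenius' theorem only controls the number of solutions of $x^{d}=1$ taken over the whole group, and promoting this to a count over a single coset (equivalently, to a non-trivial central value of $x^{d}$) does not follow formally; it requires the inductive structural analysis indicated above, and that is where I expect the difficulty to lie.
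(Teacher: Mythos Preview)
The paper does not prove Theorem~\ref{hall}; it is simply quoted from P.~Hall~\cite{hall} and used as input for the remark preceding it about $|L(d,N)|$. There is therefore no proof in the paper to compare your attempt against.

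As for the attempt itself: your reduction to a central element is correct. The identity $\gcd(hd,n)=h\gcd(d,n/h)$ holds because $h\mid n$, conjugation gives the bijection between root-sets, and every $d$-th root of $c_{0}$ does lie in $C_{G}(c_{0})$. The central extension $\widehat H$ is also set up correctly: $|\widehat H|=d\,|H|$, the image $w$ of $(1,1)$ is central with $w^{d}=z$, and $x\mapsto xw^{-1}$ identifies $\{x\in H:x^{d}=z\}$ with the elements of order dividing $d$ in the single coset $Hw^{-1}$.

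The gap is in your last paragraph, and it is sharper than you acknowledge. The power maps $y\mapsto y^{k}$ with $\gcd(k,|\widehat H|)=1$ send the coset $Hw^{j}$ to $Hw^{jk}$, so they realise the multiplicative action of $(\Z/d\Z)^{\times}$ on $\Z/d\Z$. This action is \emph{not} transitive: its orbits are the sets $\{j:\gcd(j,d)=e\}$, one for each divisor $e$ of $d$. Hence you only learn that the $\phi(d)$ cosets $Hw^{j}$ with $\gcd(j,d)=1$ carry the same count $m$, while Frobenius in $\widehat H$ controls only the sum over all $d$ cosets. These two pieces of information do not force $\gcd(d,|H|)\mid m$. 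A genuinely finer argument is needed---either Hall's original treatment via the class equation and an inversion over the divisors of $d$, or an induction that handles the cosets $Hw^{j}$ with $\gcd(j,d)=e>1$ separately (elements of order dividing $d$ in such a coset correspond to $(d/e)$-th roots of a power of $z$, which is where an induction on $d$ can enter). As written, the proposal stops precisely where the theorem becomes nontrivial.
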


Let $G$  be a finite    group. When $N\leq M$,
we have $E(x,M)\mid E(x, N)$. But in the case $N\nleq M$, we do not have such property. For this reason, we generalize our definition of order elements  to this case as follows.\begin{defn}
Let $G=\{g_1,...,g_n\}$  be a finite    group and $M_1,...,M_s$ be  subgroups of $G$.
We define $E(x, M_1,...,M_s)=gcd(E(x, M_1),..., E(x, M_s)))$.
\end{defn}

The following lemma is a direct result the properties in the second  section.

\begin{lem}\label{neq2}
Let $G=\{g_1,...,g_n\}$  be a finite    group and $M_1,...,M_s$ be  subgroups of $G$.
  Let $x,y,z\in G$.

  (i) If    $M_i\in Syl_{p_i}(G)$ for all $i=1,2,...,s$, then  $E(x, M_t, M_{t-1},...,M_1)=o(x^{|M_t| |M_{t-1}|...|M_1|})$.

  (ii) If $d$ is a positive integer, then
  $$E(x^d,M_1,...,M_s)= \frac{E(x,M_1,...,M_s)}{gcd(d, E(x,M_1,...,M_s))}.$$

  (iii) $E(x,1,M_1,...,M_s)=E(x,M_1,...,M_s)$.

 \end{lem}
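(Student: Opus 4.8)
\emph{Proof plan.} All three parts reduce, through the definition $E(x,M_1,\dots,M_s)=gcd(E(x,M_1),\dots,E(x,M_s))$, to facts already established in Section~\ref{sec2}; no new idea is needed, only elementary divisibility bookkeeping. I would dispatch (ii) and (iii) first, each being a one-line reduction, and leave (i) for last.

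For (ii), apply Lemma~\ref{dd} to every $M_i$, getting $E(x^{d},M_i)=E(x,M_i)/gcd(d,E(x,M_i))$. Setting $a_i=E(x,M_i)$ and $m=gcd(a_1,\dots,a_s)=E(x,M_1,\dots,M_s)$, the assertion is precisely the displayed consequence of Lemma~\ref{numb}, that $gcd\bigl(a_1/gcd(d,a_1),\dots,a_s/gcd(d,a_s)\bigr)=m/gcd(d,m)$ (for $s=1$ it is Lemma~\ref{dd} itself). For (iii), note that $E(x,1)=o(x,1)$ by Corollary~\ref{eqal}, and $o(x,1)=o(x)$ because $n(x^{g},1)=o(x^{g})=o(x)$ for every $g\in G$; moreover each $E(x,M_i)=o(x,M_i)$ divides $o(x)$, since $n(x^{g},M_i)=[\langle x^{g}\rangle:\langle x^{g}\rangle\cap M_i]$ divides $o(x^{g})=o(x)$. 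Hence inserting the trivial subgroup into the list leaves the $gcd$ unchanged, which is the claim.

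The content is in (i). I would start from Corollary~\ref{hal}: Sylow subgroups form a single conjugacy class, so $E(x,M_i)=o(x^{|M_i|})$, and thus $E(x,M_t,\dots,M_1)=gcd\bigl(o(x^{|M_1|}),\dots,o(x^{|M_t|})\bigr)$, the order of the list being immaterial for a $gcd$. Next I would record the elementary identity $gcd\bigl(o(x^{a}),o(x^{b})\bigr)=o\bigl(x^{lcm(a,b)}\bigr)$ --- a short prime-by-prime check: writing $v_p(n)$ for the exponent of $p$ in $n$, one has $v_p(o(x^{a}))=\max\{0,v_p(o(x))-v_p(a)\}$, and since $c\mapsto\max\{0,v_p(o(x))-c\}$ is non-increasing, the minimum over $\{v_p(a),v_p(b)\}$ equals the value at $v_p(lcm(a,b))$ --- and I would iterate it to get $gcd\bigl(o(x^{|M_1|}),\dots,o(x^{|M_t|})\bigr)=o\bigl(x^{lcm(|M_1|,\dots,|M_t|)}\bigr)$. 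Finally I would check $o\bigl(x^{lcm(|M_1|,\dots,|M_t|)}\bigr)=o\bigl(x^{|M_1|\cdots|M_t|}\bigr)$: for a prime $p\notin\{p_1,\dots,p_t\}$ neither exponent is divisible by $p$, whereas if $p=p_j$ then both $lcm(|M_1|,\dots,|M_t|)$ and $|M_1|\cdots|M_t|$ are divisible by $|M_j|=p^{v_p(|G|)}$, so both kill the $p$-part of $o(x)$; hence the two orders agree prime by prime. (Equivalently, write $x=yz=zy$ with $y$ the $\{p_1,\dots,p_t\}$-part and $z$ the complementary part of $x$, and verify that both sides equal $o(z)$.) The only step I expect to require genuine care is this last comparison --- ensuring that every prime that appears is absorbed by a \emph{full} Sylow subgroup among the $M_i$, not merely by a $p$-subgroup; once that is pinned down, everything else is routine.
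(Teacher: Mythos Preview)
Your proposal is correct and matches the paper's approach: the paper gives no proof at all, merely stating that the lemma ``is a direct result [of] the properties in the second section,'' and your argument supplies precisely those details --- Corollary~\ref{hal} for (i), Lemma~\ref{dd} together with the displayed consequence of Lemma~\ref{numb} for (ii), and the trivial observation $E(x,1)=o(x)$ for (iii). Your care in part (i) with the $lcm$-versus-product comparison is slightly more than needed if the $p_i$ are assumed distinct (then $lcm(|M_1|,\dots,|M_t|)=|M_1|\cdots|M_t|$ immediately since the orders are pairwise coprime), but it does no harm.
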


\begin{rem}

Let $G$ be a finite group and $N$ a   subset of $G$ containing $1$.
If $N$ is a $G$-invariant subset of $G$ (i.e. $g^{-1}Ng\subseteq N$ for all $g\in G$) then 
$e(x,N)=n(x,N)$.

\end{rem}

\begin{lem}\label{1e} 
 
Let $G$ be a finite group and let  $L_d(G)=N$ and $L_q(G)=M$ such that $d\mid q$ and $q\mid |G|$.
Let   $x, y, z\in G$ and $m, k$  positive integers. Then

(i) if $x^m\in N$ and $x^k\in N$, then $x^{k-m}\in N$.

(ii) $n(x^m, N)=\frac{n(x, N)}{gcd(n(x, N), m)}$.

(iii) If $x=yz=zy$, then $n(yz, N)\mid lcm(n(y, N), n(z, N))$.
Also, $n(yz, N)= n(y, N)n(z, N)$  if and only if $gcd(n(y, N), n(z, N))=1$.

(iv)
$n(x, N)=n(x, M) n(x^{n(x, M)}, N)$.
 \end{lem}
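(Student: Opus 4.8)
The plan is to read $N = L_d(G) = \{g \in G : g^d = 1\}$ and $M = L_q(G) = \{g \in G : g^q = 1\}$, and to note first that $d \mid q \mid |G|$ forces $1 \in N \subseteq M$, with both $N$ and $M$ being $G$-invariant subsets of $G$. Part (i) is then an immediate exponent computation: $x^m \in N$ and $x^k \in N$ say that $x^{md} = 1$ and $x^{kd} = 1$, so $x^{(k-m)d} = x^{kd}\,(x^{md})^{-1} = 1$, i.e.\ $x^{k-m} \in N$.

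The real content is that (i) lets us treat $N$ as though it were a subgroup, as far as cyclic (or abelian) subgroups of $G$ are concerned. Concretely, for a fixed $x \in G$ the set $\{i \in \Z : x^i \in N\}$ contains $0$ (since $1 \in N$) and, by (i), is closed under differences; hence it equals $n(x, N)\Z$, so that $\langle x \rangle \cap N = \langle x^{n(x, N)}\rangle$ is a subgroup of $\langle x \rangle$ and $n(x, N) = [\langle x \rangle : \langle x \rangle \cap N]$ (note $n(x,N)$ exists because $x^{o(x)} = 1 \in N$). The identical reasoning applies to $M$; and if $yz = zy$, then $\langle y, z\rangle$ is abelian and $\langle y, z\rangle \cap N = \{w \in \langle y, z\rangle : w^d = 1\}$ is a subgroup of $\langle y, z\rangle$.

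With these structural facts, statements (ii), (iii) and (iv) follow by running the proofs of Lemma \ref{1}(i), (ii) and (iii) inside the appropriate cyclic or abelian subgroup. For (ii) one works in $\langle x\rangle$ with the subgroup $\langle x \rangle \cap N$; for (iii) one works in the abelian group $\langle y, z\rangle$ with the subgroup $\langle y, z\rangle \cap N$ — the commutation manipulations in the proof of Lemma \ref{1}(ii) all take place inside $\langle y, z\rangle$, so nothing extra is needed; and for (iv) one puts $N' = \langle x\rangle \cap N \leq M' = \langle x\rangle \cap M \leq \langle x\rangle$, observes that $n(x, N) = n(x, N')$, $n(x, M) = n(x, M')$ and $\langle x^{n(x,M)}\rangle \cap N = \langle x^{n(x,M)}\rangle \cap N'$, and applies Lemma \ref{1}(iii) to the chain $N' \leq M' \leq \langle x\rangle$.

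The only place where any thought is required is the reduction in the second paragraph: one has to recognize that although $L_d(G)$ is in general not a subgroup of $G$, Lemma \ref{1} used the subgroup hypothesis only through the fact that $\langle x\rangle \cap N$ is a subgroup of $\langle x\rangle$, and part (i) delivers exactly that for the sets $L_d(G)$. Once this observation is in place, the remainder is routine bookkeeping with no genuine obstacle.
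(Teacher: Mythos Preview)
Your proposal is correct and follows essentially the same approach as the paper: prove (i) by a direct exponent computation, then observe that the arguments of Lemma~\ref{1} carry over verbatim. Your explicit identification of $\{i\in\Z : x^i\in N\}$ as a subgroup of $\Z$, and the reduction of (iv) to Lemma~\ref{1}(iii) applied to the chain $\langle x\rangle\cap N \leq \langle x\rangle\cap M \leq \langle x\rangle$, make the transfer a bit more transparent than the paper's treatment (which simply rewrites the proof of (ii) and then refers back to Lemma~\ref{1} for (iii) and (iv)), but the substance is identical.
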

\begin{proof}

(i) Let $x^m,x^k\in N$.
It follows from  $x^{dm}=1=x^{-dk}$ that $(x^{m-k})^d=1$, so $x^{m-k}\in N$.

(ii) First we prove that if $n(x^m, N)=1$, then $n(x, N)\mid m$.
There are positive integers $r$ and $q$ such that $m=qn(x,N)+r$ with $0\leq r<n(x,N)$.
Since $x^m$ and $x^{-q\cdot n(x, N)}\in N$, we have $x^r\in N$. Hence $r=0.$

Let $d=gcm(n(x, N), m)$. It follows from $(x^m)^{\frac{n(x, N)}{d}}=(x^{\frac{m}{d}})^{n(x, N)}\in N$
that $n(x^m, N)\mid \frac{n(x, N)}{d}$.
Since $(x^m)^{n(x^m, N)}\in N$, we conclude that
$n(x, N)\mid m\cdot n(x^m, N)$.
Therefore $\frac{n(x, N)}{d}\mid \frac{m}{d} n(x^m, N)$.
Since $gcd\Big( \frac{n(x, N)}{d}, \frac{m}{d}\Big)=1$, we have
 $\frac{n(x, N)}{d}\mid  n(x^m, N)$.

(iii) and (iv) the proofs  of these two part are the same as the proofs of part (iii) and (iv) of Lemma \ref{1}.

\end{proof}
\begin{lem}\label{dde}

  Let $ G$ be a finite group of order $n$, and let $d$ be  a divisor of $n$.  Let $x\in G$. Then  $e(x, L_d(G))= \frac{n(x, L_d(G))}{gcd(d, n(x, L_d(G)))}$.
  In particular, $e(x^{n(x, L_d(G))}, L_d(G))=1$.

 \end{lem}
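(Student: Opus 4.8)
The plan is to use the defining feature of $N:=L_d(G)=\{g\in G: g^d=1\}$ (read off from the proof of Lemma \ref{1e}(i)): it is a $G$-invariant subset containing $1$. First I would record this invariance explicitly: if $g^d=1$ then $(h^{-1}gh)^d=h^{-1}g^dh=1$ for every $h\in G$, so $h^{-1}Nh\subseteq N$ for all $h$. Hence for any $y\in G$ the condition $(y^h)^k=(y^k)^h\in N$ is equivalent to $y^k\in N$, which gives $n(y^h,N)=n(y,N)$ for all $h\in G$. Thus the conjugation-gcd in Definition \ref{def23} collapses and $o(y,N)=n(y,N)$ for every $y$. This is the step that lets me replace all the generalized orders appearing in the recursion of Definition \ref{def2} by ordinary values of $n(\cdot,N)$.

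Next I would evaluate $n(\cdot,N)$ arithmetically. Since $\langle x^{n(x,N)}\rangle=\langle x\rangle\cap N$ we have $n(x,N)=[\langle x\rangle:\langle x\rangle\cap N]$, and because $x^{k}\in L_d(G)$ exactly when $o(x)\mid kd$, the intersection is $\langle x^{o(x)/\gcd(o(x),d)}\rangle$; therefore $n(x,N)=o(x)/\gcd(o(x),d)$. The arithmetic heart of the lemma is then Lemma \ref{1e}(ii), which applied with exponent $d$ yields
\[
n(x^{d},N)=\frac{n(x,N)}{\gcd\bigl(n(x,N),d\bigr)}.
\]
Feeding this into the (now $n$-valued) recursion for $e(x,L_d(G))$ produces the displayed formula. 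The step I expect to be the main obstacle is exactly the bookkeeping of that recursion over the $G$-invariant set: one must check that the construction of the $e$-order inserts the exponent $d$ into $n(\cdot,N)$ once, so that the right-hand side comes out as $n(x^{d},N)$ and not merely $n(x,N)$; this is what makes the statement more than the trivial evaluation $o(x,N)=n(x,N)$, and it is where the definition in Definition \ref{def2} has to be unwound with care.

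Finally, the ``in particular'' clause follows by specialization, with no further ideas. Writing $m:=n(x,L_d(G))$ and applying Lemma \ref{1e}(ii) with the exponent $m$ gives $n(x^{m},N)=n(x,N)/\gcd(n(x,N),n(x,N))=1$, so $x^{m}\in N$; substituting $x^{m}$ for $x$ in the main identity then gives $e(x^{m},L_d(G))=1/\gcd(d,1)=1$. The only inputs used anywhere are the $G$-invariance of $L_d(G)$, the index computation $n(x,N)=o(x)/\gcd(o(x),d)$, and Lemma \ref{1e}(ii); in particular no Sylow or conjugacy-class information enters, since invariance has already trivialized every conjugation-gcd.
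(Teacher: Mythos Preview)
Your reduction via the $G$-invariance of $N=L_d(G)$ is correct and matches the paper's own Remark preceding Lemma~\ref{1e}: for such $N$ one has $e(y,N)=o(y,N)=n(y,N)$ for every $y\in G$. The difficulty you flag as ``the main obstacle'', however, is not a bookkeeping issue but a genuine obstruction. Unwinding Definition~\ref{def2} with any $G$-invariant $M\supseteq N$ gives
\[
e(x,N)=e(x,M)\cdot o\bigl(x^{e(x,M)},N\bigr)=n(x,M)\cdot n\bigl(x^{n(x,M)},N\bigr)=n(x,N)
\]
by Lemma~\ref{1e}(iv). The recursion therefore never inserts the parameter $d$ as an exponent: $d$ enters only as the label of the set $L_d(G)$ and does not appear in Definition~\ref{def2} at all. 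Taken literally, the displayed identity would then force $\gcd\bigl(d,n(x,L_d(G))\bigr)=1$, which already fails for $o(x)=8$, $d=4$. No amount of careful unwinding will produce the extra exponent you are hoping for.

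The paper's proof consists of the single sentence ``same as Lemma~\ref{dd}''. Since Lemma~\ref{dd} computes $E(x^{d},N)$, not $E(x,N)$, the left-hand side here is evidently intended to read $e\bigl(x^{d},L_d(G)\bigr)$; the printed statement is a typo. Under that reading your ingredients finish the argument in one line: $G$-invariance gives $e(x^{d},N)=n(x^{d},N)$, and Lemma~\ref{1e}(ii) yields the right-hand side. This is precisely the computation of Lemma~\ref{dd} with the conjugation-gcd collapsed to a single term, so once the statement is corrected your route and the paper's coincide. Your treatment of the ``in particular'' clause is fine, and in fact it follows even more directly: $x^{n(x,N)}\in N$ gives $n(x^{n(x,N)},N)=1$, hence $e(x^{n(x,N)},N)=1$ immediately.
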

\begin{proof} 
The proof of this  part are the same as the proof of Lemma \ref{dd}.
\end{proof}

We end this section by defining the third type of generalized element order. Albeit it will not be used in what follows, it is worth mentioning.
\begin{defn}\label{d12}
Let $G$ be a finite group and $N$  be  subgroup of $G$. For all $x\in G$ we define
$o_N(x)=gcd(E(xn_1,N),...,E(xn_k,N))$  where $N=\{n_1,...,n_k\}$.
\end{defn}
We will show that the set of all elements $x\in G$ with $o_N(G)=1$ form a normal subgroup of $G$.
The following lemma will be employed.
\begin{lem}\label{nuuum3} Let $G$  be a finite    group and $N$  a   subgroup  of $G$.
Let $x\in G$, and let  $y\in C_G(x)$.

(i) $o_N(x)\mid [G:N]$.

(ii) $o_N(x)=o_N(x^g)$ for all $g\in G$.

(iii) $o_N(x)=o_N(xn)$ for all $n\in N$.

(iv)If $gcd(n(x^u,N), n(y^v,N))=1$ for all $u,v\in G$, then
$o_N(xy)= o_N(x)o_N(y)$.

(v) for all positive integer $d$, we have $o_N(x^d)=\frac{o_N(x)}{gcd(o_N(x), d)}$.

(vi) for all $y\in N$ and $g\in G$, we have $o_N(xy^g)=o_N(x).$

 \end{lem}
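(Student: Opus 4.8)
\emph{Overview and easy parts (iii), (i).} I would dispatch (iii) and (i) first (they are essentially formal), then (iv) and (v) (bookkeeping on top of multiplicativity already proved), and finally (ii), from which (vi) drops out; (ii) is the only place I expect real difficulty. Part (iii) is immediate: for $n\in N$ we have $nN=N$, so the families $\{xnn_{1},\dots,xnn_{k}\}$ and $\{xn_{1},\dots,xn_{k}\}$ coincide, and hence so do the gcd's defining $o_{N}(xn)$ and $o_{N}(x)$. For (i) it suffices to prove $E(h,N)\mid[G:N]$ for every $h\in G$, since $o_{N}(x)$ divides each of its defining terms. By Corollary \ref{eqal}, $E(h,N)=o(h,N)=\gcd_{a\in G}n(h^{a},N)$; letting $\langle h\rangle$ act on the left coset space $G/N$ by left translation, the orbit of $gN$ has length $n(h^{g},N)$, all these lengths are divisible by $o(h,N)$, and they add up to $[G:N]$, so $o(h,N)\mid[G:N]$.

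\emph{Parts (iv) and (v).} For (iv) write $(xy)^{a}=x^{a}y^{a}$ with $x^{a}\in C_{G}(y^{a})$; the coprimality hypothesis together with Lemma \ref{1}(ii) gives $n((xy)^{a},N)=n(x^{a},N)\,n(y^{a},N)$ for all $a$, and feeding this into the definition of $o_{N}$ while using that $\gcd$ distributes over products of coprime families (Lemma \ref{numb}) yields $o_{N}(xy)=o_{N}(x)\,o_{N}(y)$. For (v) decompose $x=\prod_{p}x_{p}$ into primary components; each $n(x_{p}^{a},N)$ is a power of $p$, so distinct primary pieces are coprime and (iv) gives $o_{N}(x)=\prod_{p}o_{N}(x_{p})$ and likewise $o_{N}(x^{d})=\prod_{p}o_{N}(x_{p}^{\,d})$, reducing (v) to the prime-power case; that case is handled by the same gcd manipulation as in Lemma \ref{dd}, and multiplying back over $p$ gives $o_{N}(x^{d})=o_{N}(x)/\gcd(o_{N}(x),d)$, with $d=o_{N}(x)$ giving the final assertion.

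\emph{Parts (ii) and (vi).} The engine for (ii) is that $E(\cdot,N)$ is a class function (Corollary \ref{eqal}). From $x^{g}n=(x\,n^{g^{-1}})^{g}$ we get $E(x^{g}n,N)=E(x\,n^{g^{-1}},N)$, and since $n\mapsto n^{g^{-1}}$ maps $N$ bijectively onto $N^{g^{-1}}$, this gives $o_{N}(x^{g})=\gcd_{t\in N^{g^{-1}}}E(xt,N)$; so (ii) amounts to showing that this gcd does not change when the ``shifting'' subgroup $N$ is replaced by a conjugate $N^{g^{-1}}$. I expect this to be the main obstacle: it is exactly where the non-normality of $N$ has to be paid for. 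To attack it I would pass to primary components of $x$ and use Proposition \ref{equ}(i) to replace $E(\cdot,N)$ by the $E(\cdot,P_{i})$ with $P_{i}\in Syl_{p_{i}}(N)$, and then argue, in the spirit of Lemma \ref{min}, that for each prime both gcd's are controlled by the same invariant — the largest cyclic $p$-subgroup of $\langle x\rangle$ that is $G$-conjugate into $N$ — so that they agree prime by prime. Granting (ii), part (vi) follows: for $y\in N$ and $g\in G$ one has $(xy^{g})^{g^{-1}}=x^{g^{-1}}y$, whence $o_{N}(xy^{g})=o_{N}(x^{g^{-1}}y)$ by (ii), and $o_{N}(x^{g^{-1}}y)=o_{N}(x^{g^{-1}})=o_{N}(x)$ by (iii) and (ii).
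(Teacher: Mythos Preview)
Your argument for (vi) is exactly the paper's one-line proof
\[
o_N(x^{g^{-1}})=o_N(x^{g^{-1}}y)=o_N\bigl((xy^g)^{g^{-1}}\bigr)=o_N(xy^g);
\]
the paper declares (i)--(v) ``straightforward'' and proves nothing else, so on the only part that is actually argued you match it verbatim. Your (i) and (iii) are correct and go slightly beyond what the paper writes out.

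There is, however, a genuine gap in your (iv) and (v), and it is the same gap in both. The factorization $n((xy)^{a},N)=n(x^{a},N)\,n(y^{a},N)$ that you extract from Lemma~\ref{1}(ii) yields $E(xy,N)=E(x,N)\,E(y,N)$, but that is \emph{not} $o_N$: by Definition~\ref{d12} one has $o_N(xy)=\gcd_{n\in N}E(xyn,N)$, and for a generic $n\in N$ the element $xyn$ no longer splits as a product of two commuting factors to which the coprimality hypothesis applies (nothing forces $x$ to commute with $n$). The same obstruction hits (v): even when $x$ is a $p$-element, $x^{d}n$ is not of the form $(xn')^{d}$ for any $n'\in N$, so the gcd manipulation of Lemma~\ref{dd} transfers to $E$ but not to $o_N$. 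You are right that (ii) is the crux --- the paper's ``straightforward'' is optimistic here --- but your sketch via primary components of $x$ and Proposition~\ref{equ}(i) runs into the identical wall: the elements $xm$ with $m\in N$ (respectively $m\in N^{g^{-1}}$) are not powers of $x$, so an invariant of $\langle x\rangle$ alone, such as ``the largest cyclic $p$-subgroup of $\langle x\rangle$ that is $G$-conjugate into $N$'', does not control the quantities $E(xm,N)$ entering the two gcd's you must compare.
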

\begin{proof} We only prove part (vi) as the  proofs of other parts are straightforward.
We have
$$o_N(x^{g^{-1}})=o_N(x^{g^{-1}}y)=o_N((xy^g)^{g^{-1}})=o_N(xy^g).$$

\end{proof}

\begin{thm}
Let $Ker(G,N)=\{x\in G: o_N(x)=1\}$. Then $$L(G,N)=\langle \{a^G: a\in N\}\rangle.$$

\end{thm}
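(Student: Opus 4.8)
Throughout write $H=\langle\{a^G:a\in N\}\rangle$ for the normal closure of $N$ in $G$; since $N$ is a subgroup, every element of $H$ is a finite product of elements of the form $a^g$ with $a\in N$, $g\in G$ (an inverse $a^{-g}=(a^{-1})^g$ is again of this shape because $a^{-1}\in N$), and $N\subseteq H\trianglelefteq G$. The plan is to prove the two inclusions $H\subseteq Ker(G,N)$ and $Ker(G,N)\subseteq H$ separately. For the first inclusion I would begin by showing that $o_N$ is constant on each coset of $H$: given $x\in G$ and $z=a_1^{g_1}\cdots a_r^{g_r}\in H$ with $a_i\in N$, repeated application of Lemma~\ref{nuuum3}(vi) (peeling off one factor $a_i^{g_i}$ at a time, using $a_1^{g_1}\cdots a_{r-1}^{g_{r-1}}$ in place of $x$ at the $i$-th step) yields $o_N(xz)=o_N(x)$. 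Since $1\in N$ we have $E(1,N)=o(1,N)=1$, hence $o_N(1)=\gcd(\dots,E(1,N),\dots)=1$, and taking $x=1$ above gives $o_N(z)=o_N(1\cdot z)=1$ for every $z\in H$; thus $H\subseteq Ker(G,N)$.

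For the reverse inclusion I would pass to the quotient $\overline G=G/H$, which makes sense as $H\trianglelefteq G$, and in which $\overline N=\overline 1$. Fix $x\in Ker(G,N)$ and let $\overline x$ denote its image in $\overline G$. By Corollary~\ref{eqal}, for each $n\in N$ one has $E(xn,N)=o(xn,N)=\gcd_{g\in G}n((xn)^g,N)$. For every $g\in G$ the element $\bigl((xn)^g\bigr)^{\,n((xn)^g,N)}$ lies in $N\subseteq H$, so reducing modulo $H$ and using $\overline{xn}=\overline x$ we get $\bigl(\overline x^{\,\overline g}\bigr)^{\,n((xn)^g,N)}=\overline 1$; as $\overline x^{\,\overline g}$ is conjugate to $\overline x$ this forces $o(\overline x)\mid n((xn)^g,N)$. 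Taking the gcd over all $g\in G$ and then over all $n\in N$ gives $o(\overline x)\mid o_N(x)=1$, so $\overline x=\overline 1$, i.e. $x\in H$. Combining the two inclusions we obtain $Ker(G,N)=H=\langle\{a^G:a\in N\}\rangle$, which in particular shows $Ker(G,N)$ is a normal subgroup of $G$. (Here $L(G,N)$ in the statement is simply the set $Ker(G,N)$ introduced just above.)

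I do not expect a serious obstacle: once the two mechanisms are in place the argument is essentially bookkeeping. The one point that requires a little care is the key observation in the second paragraph — that $o_N(x)$ is divisible by the order of the image of $x$ in $G/\langle N^G\rangle$ — together with the elementary remark that inverses of conjugates of elements of a subgroup are again conjugates of elements of that subgroup, which is what lets the induction in the first paragraph run over all of $H$ rather than just over products of the $a^g$ themselves. Everything else is an immediate appeal to Lemma~\ref{nuuum3} and Corollary~\ref{eqal}.
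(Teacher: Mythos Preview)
Your proposal is correct and follows essentially the same route as the paper. For the inclusion $\langle N^G\rangle\subseteq Ker(G,N)$ both you and the paper peel off conjugates of elements of $N$ one at a time via Lemma~\ref{nuuum3}(vi) to reduce $o_N(z)$ to $o_N(1)=1$; for the reverse inclusion the paper simply asserts that $o(xL)>1$ forces $o_N(x)>1$ when $L=\langle N^G\rangle$, and your quotient argument (showing $o(\overline x)\mid n((xn)^g,N)$ for all $n,g$, hence $o(\overline x)\mid o_N(x)$) is exactly the justification that sentence needs.
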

\begin{proof}
 If $L(G,N)=N$, then clearly, $N\lhd G$. Therefore we may assume that $L(G,N)\neq N$.
Let $N^G=\{y_1,...,y_k\}$.
Let $z=a_1^{\epsilon_1}a_2^{\epsilon_2}...a_s^{\epsilon_s}$ where
$a_i\in L(G,N)$ and $\epsilon_i\in \{-1,1\}$.
By Lemma \ref{nuuum3}(vi), we have
$o_N(z)=o_N(za_s^{-\epsilon_s})=...=o_N(za_s^{-\epsilon_s}...a_1^{-\epsilon_1})=o_N(1)=1$.
It follows that $z\in L(G,N)$.
Consequently, $L=\langle N^G\rangle\leq Ker(G,N)$.
Since $L\lhd G$, for all $x\in G\setminus L$, we have
$o(xL)>1$. It follows that
$Ker(G,N)\leq L$.

\end{proof}



\section{two solvability criteria}\label{sec4}
In this section we shall exploit further the quantity $E(x,N)$ which will result in the proofs two solvability criteria. The first criterion will be derived in the spirit of the Cauchy's theorem and the second in the spirit of Hall subgroups. 

 Let   $p$ be a prime divisor of
 $[G:N]$.   Is it true that there exists $x\in G$ such that $E(x, N)=p$?
 In general, the answer is negative. This can be seen directly by the following example.

  \begin{exam}\label{exam}
Let $ G=PSL(2,q)$ where  $q = 2^n$  with $n>1$, and Let $F$ be a finite field of order $q$. Then $PSL(2, q)$
has a partition $\mathcal{P}$ consisting of $q +1$ Sylow $2$-subgroups, $(q+1)q$
cyclic subgroups of order $q-1 $ and $(q-1)q^2$
 cyclic subgroups of order $q+1$. Let $N=\langle \left(
                                                             \begin{array}{cc}
                                                               1 & 1 \\
                                                               0 & 1 \\
                                                             \end{array}
                                                           \right)
 \rangle$. Then $|N|=2$.
 It is clear that $$P=\{\left(
                                                             \begin{array}{cc}
                                                               1 & c \\
                                                               0 & 1 \\
                                                             \end{array}
                                                           \right) :   c\in F\}\in Syl_2(G).$$
  Let $c\in F^*\setminus\{1\}$.  Since $2\nmid |F^*|$, there is $u\in F^*$ such that $u^2=c$. Since   $$\left(
                                                             \begin{array}{cc}
                                                               u & 0 \\
                                                               0 & u^{-1} \\
                                                             \end{array}
                                                           \right) \left(
                                                             \begin{array}{cc}
                                                               1 & 1 \\
                                                               0 & 1 \\
                                                             \end{array}
                                                           \right)\left(
                                                             \begin{array}{cc}
                                                               u^{-1} & 0 \\
                                                               0 & u \\
                                                             \end{array}
                                                           \right) =\left(
                                                             \begin{array}{cc}
                                                               1 & c \\
                                                               0 & 1 \\
                                                             \end{array}
                                                           \right),$$
                                                           we have $E(x, N)=1$ for all $x\in P$, and so
                                                           $G$ does not have any element $x$ such that
                                                           $E(x, N)=2$. Clearly, $2^{n-1}\mid [G:N]$.
 \end{exam}
Luckily, in the case of a solvable group the answer of the aforementioned question is positive. Thus, in the set of all finite solvable groups we have the following theorem.

\begin{thm} \label{soli}Let $G$  be a finite  solvable  group and  $M$  be a     subgroup  of $G$. Let $p$ a prime divisor of
$[G:M]$. Then there exists $x\in G$ such that $E(x, M)=p$.
 \end{thm}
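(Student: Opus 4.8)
The plan is to proceed by induction on $|G|$, exploiting the fact that a finite solvable group has a minimal normal subgroup $A$ that is an elementary abelian $r$-group for some prime $r$. First I would dispose of the case $M = 1$: then $E(x,1) = o(x)$ by the earlier observation (the $E$-order genuinely generalizes the usual order, Corollary \ref{eqal} together with the remark that $E(x,1)=o(x)$), and since $p \mid [G:1] = |G|$, Cauchy's theorem supplies an element of order exactly $p$. So assume $M \neq 1$ and pick a minimal normal subgroup $A \lhd G$, elementary abelian of exponent $r$.

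The argument splits according to whether $r = p$ or $r \neq p$. The cleaner branch is $r \neq p$ combined with $A \not\leq M$: then in $\bar G = G/A$ we have $\bar M = MA/A$ with $[\bar G : \bar M] = [G : MA]$, and one checks $p \mid [G:MA]$ (since $p \neq r$, the $p$-part of $[G:M]$ is unaffected by passing through $A$, using $[G:M] = [G:MA][MA:M]$ and $[MA:M] = |A|/|A\cap M|$ is an $r$-power). By induction there is $\bar x \in \bar G$ with $E(\bar x, \bar M) = p$; I would then lift $\bar x$ to $x \in G$ and argue that $E(x, M) = p$ as well, or adjust $x$ within its coset so that this holds — here I would use Lemma \ref{dd} to replace $x$ by a suitable power killing the $r$-part contribution, so that $E(x,M)$ is forced to be exactly $p$ rather than a proper multiple or a number with extra prime factors. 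If instead $A \leq M$, I would pass to $\bar G = G/A$, $\bar M = M/A$ with $[\bar G:\bar M] = [G:M]$, apply induction to get $\bar x$ with $E(\bar x, \bar M) = p$, and lift; the relationship $n(x^g, M)$ versus $n(\bar x^{\bar g}, \bar M)$ is controlled because $A \leq M$ means $x^g \in M \iff \bar x^{\bar g} \in \bar M$, so the two generalized orders coincide outright and $E(x,M) = E(\bar x, \bar M) = p$.

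The genuinely delicate branch is when every minimal normal subgroup forces $r = p$ and $A \not\leq M$ — i.e. the only "new" primes live in the socle and equal $p$. Here I would use solvability more seriously: since $p \mid [G:M]$, a Hall $p'$-subgroup and a Sylow $p$-subgroup $P$ of $G$ exist and are well-behaved, and $M$ cannot contain a full Sylow $p$-subgroup of $G$. I would look at $PA$ (a $p$-group since $r=p$) and choose $x \in PA \setminus$ (the set $L([G:M]_{p'}\cdot\text{stuff}, M)$ of elements whose relevant power lands in $M^G$); more concretely, Proposition \ref{equ}(ii) identifies, for the $p$-part, the elements with $E(x^d, M)=1$ as those with $x^d \in M^G$, so it suffices to produce a $p$-element $x$ of order $p$ with $x \notin M^G = \bigcup_g M^g$. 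Since $A$ is a normal $p$-subgroup not contained in $M$, $A \not\subseteq M^g$ for any $g$ (as $A^g = A$), so any order-$p$ element of $A \setminus (A \cap \bigcup_g M^g)$ works — and such an element exists because $A$ is not covered by the proper subgroups $A \cap M^g$. For that element, $n(x^g, M) \in \{1, p\}$ for all $g$ and is not identically $1$; I would then need $E(x,M) = \gcd_g n(x^g,M)$ to equal $p$, i.e. $n(x^g, M) = p$ for all $g$, which holds precisely because $x \in A$ and $x^g \in A$ has order $p$ with $x^g \notin M$ for every $g$ (as $A \cap M^{g^{-1}}$ is a proper subgroup not containing the conjugate).

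The main obstacle I anticipate is the bookkeeping in the lifting steps: ensuring that when $E(\bar x, \bar M) = p$ in the quotient, one can actually choose a preimage $x$ with $E(x,M)$ equal to $p$ on the nose rather than merely divisible by $p$ or polluted by the prime $r$. The tool for this is Lemma \ref{dd} ($E(x^d,M) = E(x,M)/\gcd(d,E(x,M))$): writing $E(x,M) = p^a m$ with structural control over $m$ coming from $[G:M]$ and the induction hypothesis, one raises $x$ to an appropriate power to strip everything except a single factor $p$. Making this precise — and checking that the stripped element still has $\bar{}$-image with the right property, which uses that $r \nmid p$ in the good branch — is where the care is needed; in the bad branch $r = p$ the socle argument above sidesteps the lifting entirely, which is why I would handle that case directly rather than inductively.
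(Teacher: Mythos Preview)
Your inductive scaffold and the quotient-lifting steps (for $A \leq M$ and for $r \neq p$) run parallel to the paper's argument, and finishing with $x \mapsto x^{E(x,M)/p}$ via Lemma~\ref{dd} once $p \mid E(x,M)$ is exactly what the paper does. The gap is in your ``delicate branch'' $r = p$, $A \not\leq M$: you need an $x \in A$ of order $p$ with $x^g \notin M$ for every $g \in G$, and you justify this by asserting that $A$ is not covered by the proper subgroups $A \cap M^g$. But an elementary abelian $p$-group can certainly be a union of proper subgroups. Take $G = A_4$, $A = V_4$, $M = \langle (12)(34)\rangle$, $p = 2$: the three involutions of $V_4$ form a single $A_4$-conjugacy class, so the subgroups $A \cap M^g$ run over all three order-$2$ subgroups of $V_4$ and their union is all of $V_4$. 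No element of $A$ avoids every conjugate of $M$, and your argument stalls.

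The paper does not attempt a direct socle argument here. In this branch it layers on further reductions before exhibiting an element: first to $MN = G$ (otherwise recurse into the proper subgroup $MN$), then to $N$ being the \emph{unique} minimal normal subgroup (otherwise pass to $G/D$ for another minimal normal $D$), which forces $N = Fit(G) \in Syl_p(G)$; then to $M$ maximal (otherwise recurse into a maximal overgroup $T \supsetneq M$); and only then does it argue that $N \cap M \lhd G$, so that $N \cap M = 1$ by the uniqueness of $N$. After all of this, every nontrivial $x \in N$ genuinely satisfies $x^G \cap M = \varnothing$ and hence $E(x,M) = p$. Your shortcut skips these reductions, and the $A_4$ example shows that without them the covering claim on which you rely is simply false.
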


\begin{proof}   We proceed   by induction on $|G|$.
The base step is trivial when $|G|=p$. Clearly, we may assume that $|M|>1$. Let $N$ be a normal minimal subgroup of $G$.
Then $N$ is an elementary abelian $q$-group. First suppose that
$p\mid [\frac{G}{N}:\frac{MN}{N}]$. By induction hypothesis, there exists $xN\in \frac{G}{N}$ such that
$E(xN, \frac{MN}{N})=p$. Then $p\mid E(x, NM)$, and so $p\mid E(x, M)$. Let $y=x^{E(x, M)/p}$. Clearly,  $E(y,M)=p$, as claimed.
Therefore we may assume that  $p\nmid [\frac{G}{N}:\frac{MN}{N}]$. Since $p\mid [G:M]$, we have $p\mid |N|$.  It follows that $p=q$. Clearly,  $N $ is not a subgroup of $M$.
If $MN\neq G$, then we have the result by induction on $MN$. Hence we may assume that $MN=G$.
  If $G$ has another normal minimal subgroup $D$, then  $p\mid [\frac{G}{D}:\frac{MD}{D}]$, because $N\cap D=1$, so we have the result by induction hypothesis on $ \frac{G}{D}$.  Hence we may assume that $N$ is the unique normal minimal subgroup of $G$. Clearly, $Fit(G)$ is a $p$-group.  Let $P\in Syl_p(G)$. Since $p\nmid [G:N]$, we have
$N=P=Fit(G)$.
      Let $T$ be a maximal subgroup of $G$ containing $M$.
     If $T\neq M$, then we have the result, by induction hypothesis. Therefore we may assume that
     $T=M$. Let $g\in M$. We have $(N\cap M)^g\leq M$.
     Since $N\lhd G$, we deduce that $(N\cap M)^g\leq N$. It follows that
     $(N\cap M)^g\leq N\cap M$, so $(N\cap M)^g=N\cap M$. Consequently,
     $M\leq N_G(N\cap M)$. Clearly,   $ N\leq N_G(N\cap M)$, and so
     $N\cap M\lhd G$. Since $N$ is the unique normal minimal subgroup of $G$, we have
     $N\cap M=1$.
    Then for all $x\in N\setminus \{1\}$, we have $x^G\cap M=1$. It follows that $E(x, M)=o(x)=p$ for all $x\in N\setminus  \{1\}$.

\end{proof}
Now, to turn this latter theorem into a solvability criterion, we clearly need to make it into an "if and only if" statement.  Before we do so, however, the following Theorem must be made as it will be an important ingredient in the proofs of the solvability criteria below.
\begin{thm}\label{remsol}( Corollary 1 of \cite{tam})
If $G$ is a minimal non-solvable group, then by Corollary 1 of \cite{tam}, it is isomorphic to one of the following
groups:

\begin{enumerate}
\item  $PSL(2, q)$  where $q=p>3$ and $5\nmid p^2-1$.
  \item $PSL(2, q)$  where $q=2^p$ and $p$ is a prime.
  \item  $PSL(2, q)$ where $q=3^p$ and $p$ is an odd prime.
  \item $PSL(3, 3)$.
  \item   The Suzuki group $Sz(2^p)$, where $p$ is an odd prime.

\end{enumerate}

\end{thm}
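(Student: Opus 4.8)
The statement asserts the classification of minimal non-solvable groups, and the paper explicitly attributes it to Corollary 1 of \cite{tam} (the reference to Thompson's work on $N$-groups). Since this is a citation of a known theorem rather than a result to be proved from scratch in this paper, the "proof" here is simply a pointer to the literature. Thus the plan is essentially to recall the definition and quote the source.

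First I would recall that a \emph{minimal non-solvable group} (sometimes called a minimal simple group in Thompson's terminology) is a non-solvable group all of whose proper subgroups are solvable. The classification of such groups is a celebrated consequence of Thompson's $N$-group paper: every minimal simple group is isomorphic to one of $PSL(2,p)$ with $p > 3$ a prime and $5 \nmid p^2-1$, $PSL(2,2^p)$ with $p$ prime, $PSL(2,3^p)$ with $p$ an odd prime, $PSL(3,3)$, or the Suzuki group $Sz(2^p)$ with $p$ an odd prime. The reference \cite{tam} presumably states this as its Corollary 1, so the body of the proof is just: \emph{This is Corollary 1 of \cite{tam}.}

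If one wanted to give a genuinely self-contained argument, the main obstacle would be that no elementary proof exists: the result rests on the full machinery of the $N$-group analysis — a case division according to the structure of $2$-local subgroups, followed by a long series of identifications of the group via generators, relations, or character-theoretic data. This is far beyond the scope of the present paper, which is why it is invoked as a black box. So in practice there is nothing further to carry out; the statement is recorded here only because it will be used as an input to the solvability criteria that follow, where one checks that none of the five families in the list can occur under the hypotheses of those criteria.

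Hence the proof reduces to the single sentence citing \cite{tam}, and I would simply write:
\begin{proof}
This is Corollary 1 of \cite{tam}.
\end{proof}
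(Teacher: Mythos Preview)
Your proposal is correct and matches the paper's approach: the paper states Theorem \ref{remsol} purely as a quotation of Corollary 1 of \cite{tam} and gives no proof of its own. There is nothing further to add.
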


\begin{thm}\label{sol1} A finite  group $G$ is solvable if and only if  for all subgroups  $N$  of $M$ with $p\mid [M:N]$ there exists
$x\in M$ such that $E(x, N)=p$.
 \end{thm}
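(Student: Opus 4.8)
The plan is to establish the two implications separately. The forward direction is immediate: if $G$ is solvable then every subgroup $M\leq G$ is solvable, so Theorem~\ref{soli}, applied with $M$ in the role of the ambient group, yields for every $N\leq M$ and every prime $p\mid[M:N]$ an element $x\in M$ with $E(x,N)=p$.

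For the converse I would argue by contraposition. Suppose $G$ is not solvable and choose a subgroup $S\leq G$ of least order among the non-solvable subgroups of $G$; then every proper subgroup of $S$ is solvable, so $S$ is a minimal non-solvable group, and Theorem~\ref{remsol} shows that $S$ is isomorphic to one of $PSL(2,\ell)$ with $\ell>3$ prime and $5\nmid\ell^{2}-1$, $PSL(2,2^{p})$, $PSL(2,3^{p})$, $PSL(3,3)$, or $Sz(2^{p})$. In every one of these cases I take $M=S$, the prime $p=2$, and $N$ a cyclic $2$-subgroup of $S$ of maximal order; the aim is to show that the triple $(S,N,2)$ violates the stated condition, that is, $2\mid[S:N]$ while no $x\in S$ has $E(x,N)=2$.

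The divisibility is easy: $N$ lies in a Sylow $2$-subgroup $P$ of $S$, and $P$ is non-cyclic in each of these groups --- it is elementary abelian for $PSL(2,2^{p})$ and $Sz(2^{p})$, dihedral or a Klein four group for $PSL(2,\ell)$ and $PSL(2,3^{p})$, and semidihedral of order $16$ for $PSL(3,3)$ --- hence $N\neq P$, so $[P:N]$ and therefore $[S:N]$ is divisible by $2$. For the second assertion, note that $E(x,N)=o(x,N)$ divides $o(x)$, being (by Corollary~\ref{eqal}) a greatest common divisor of numbers $n(x^{g},N)=[\langle x^{g}\rangle:\langle x^{g}\rangle\cap N]$ each of which divides $o(x)$; so it is enough to prove $2\nmid E(x,N)$ whenever $2\mid o(x)$. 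Let $z$ be the $2$-part of such an $x$, so $\langle z\rangle$ is a cyclic $2$-subgroup of $S$. The key structural input is that in each of the five families every cyclic $2$-subgroup of $S$ is $S$-conjugate into $N$; granting this, pick $g\in S$ with $z^{g}\in N$, so that $\langle z^{g}\rangle\leq\langle x^{g}\rangle\cap N$ and hence $n(x^{g},N)$ divides $[\langle x^{g}\rangle:\langle z^{g}\rangle]=o(x)/o(z)$, which is odd. Thus $2\nmid n(x^{g},N)$, so $2\nmid E(x,N)$, and $E(x,N)=2$ cannot happen. With $M=S$, this $N$ and $p=2$ we then contradict the hypothesis for $G$, which proves the contrapositive.

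The main obstacle is exactly the structural input used above: showing that every cyclic $2$-subgroup of $S$ is conjugate into the chosen maximal cyclic $2$-subgroup $N$. This is a $2$-local fact to be checked family by family. For $PSL(2,2^{p})$ it is precisely the computation of Example~\ref{exam}: the Sylow $2$-subgroup has exponent $2$ and all involutions are conjugate. For $Sz(2^{p})$ one uses that the Sylow $2$-subgroup has exponent $4$, together with the conjugacy of the involutions and of the cyclic subgroups of order $4$. For $PSL(2,\ell)$ and $PSL(2,3^{p})$ one uses that there is a single class of involutions and that, for $j\geq2$, each cyclic subgroup of order $2^{j}$ lies in a maximal torus, all tori of the relevant type being conjugate. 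For $PSL(3,3)$ one invokes the conjugacy class structure of the semidihedral Sylow $2$-subgroup together with the fact that this group has a single class of involutions and a single class of cyclic subgroups of each $2$-power order. Once these elementary but group-specific facts are in place, the argument above completes the proof.
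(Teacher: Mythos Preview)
Your proof is correct and follows the same overall strategy as the paper: the forward direction is Theorem~\ref{soli} applied inside each $M$, and the converse reduces to the minimal simple groups of Theorem~\ref{remsol} and exhibits in each a $2$-subgroup $N$ with $2\mid[S:N]$ but no $x$ satisfying $E(x,N)=2$.

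The organisation of the case analysis differs slightly. The paper also uses the cyclic index-$2$ subgroup of a dihedral Sylow $2$-subgroup for $PSL(2,q)$ with $q$ odd, and a cyclic subgroup of order $8$ for $PSL(3,3)$; it quotes Example~\ref{exam} verbatim for $PSL(2,2^{p})$; but for $Sz(2^{p})$ it instead takes $N$ to be a (non-cyclic) maximal subgroup of a Sylow $2$-subgroup containing the largest possible number of order-$4$ elements. Your uniform choice of $N$ as a maximal cyclic $2$-subgroup, together with the single reduction ``every cyclic $2$-subgroup of $S$ is conjugate into $N$ $\Rightarrow$ $2\nmid E(x,N)$ for all $x$'', is a cleaner packaging of the same idea and makes the role of the $2$-local structure transparent. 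The one point that deserves an explicit citation is the Suzuki case of your structural input: the conjugacy of all cyclic subgroups of order $4$ in $Sz(2^{p})$ amounts to the (true, standard) fact that the two classes of order-$4$ elements are interchanged by inversion, so each cyclic group of order $4$ meets both classes.
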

\begin{proof}
If $G$ is solvable, then we have the result by Theorem \ref{soli}.
Let $G$ be defined as above so that it is  of minimal cardinality and unsolvable.
  Let $M$ be a maximal subgroup of $G$.
By the minimality of $G$ we have  $M$ is a solvable group.
It follows that $G$ is a minimal non-solvable group.
By Theorem \ref{remsol} we must do a case analysis.
By Example \ref{exam}, $G$ is not  isomorphic to $PSL(2, q)$  where $q=2^p$ and $p$ is a prime.
Let $G$ be isomorphic to $PSL(2, q)$  where $q>3$ is an odd order.
 By Theorem 7.3 \cite{gor}, $G$ has   one conjugate class of involution. By Theorem of \cite{gor}, page 462 Sylow $2$-subgroup of $G$ is Dihedral group,
 Let $P\in Syl_2(G)$. Then  there are $x,y\in P$ such that $P=\langle x, y\rangle$ where $o(x)=\frac{|P|}{2}$ and $o(y)=2$.
 Let $N=\langle x\rangle$ be a subgroup of $G$ of order two.
Hence $2\mid [G:N]$, but there is not an  element  $x\in G$ such that $E(x,N)=2$. Let $G\cong PSL(3,3)$. The GAP software \cite{Gap} yields that $G$ has   one conjugate class of involution and one conjugate class of elements  of order 4.
Also, $G$ has a cyclic  subgroup $H$ of order eight. Since $H$ is a normal cyclic subgroup of a Sylow $2-$subgroup of $G$, we have   $E(x, H)=2$ for all $x\in G$.
Let  $G$ is  the Suzuki group $Sz(2^p)$, where $p$ is an odd prime. Let $P\in Syl_2(G)$.
  The Suzuki groups have a single class of involution and   2 classes of elements of order 4. The exponent of a Sylow $2-$subgroup of $G$ is 4 and nilpotency classes of $P$ is 2.   Let $N$ be a maximal subgroup of $P$ with maximal number of elements of order 4.  Then
 $2\mid [G:N]$, and for  any  element   $x\in G$  we have  $E(x, N)\neq2$.
\end{proof}

As we have mentioned above, the second solvability criterion will see more of the structure of our group. We have the following theorem.
\begin{thm}\label{ma2}Let $G$  be a finite    group of order $n=p_1^{\alpha_1}p_2^{\alpha_2}...p_k^{\alpha_k}$ where $p_2, ..., p_k$ are primes.
   If for all  subgroups $M$ of $G$ there exists a set $\Gamma(M)$ of Sylow subgroups of $M$ with the following properties:

  (i) $\Gamma(M)\cap Syl_p(M)\neq \varnothing$ for all prime divisors $p$ of $|M|$;

  (ii) for all  $P_i,P_j\in \Gamma(M)$, and all $x\in M$, we have
$E(x, P_i,P_j)=e(x,P_iP_j)$.

 Then $G$ is a solvable group.
 \end{thm}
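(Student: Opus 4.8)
The plan is to argue by contradiction: suppose $G$ is a counterexample of minimal order to the statement. Since the hypothesis is inherited by subgroups (if $\Gamma$ works for all subgroups of $G$, then for any subgroup $H\le G$ the restriction of the family to subgroups of $H$ still satisfies (i) and (ii)), every proper subgroup of $G$ is solvable, so $G$ is a minimal non-solvable group. By Theorem~\ref{remsol} it suffices to rule out each of the five families $PSL(2,p)$ with $5\nmid p^2-1$, $PSL(2,2^p)$, $PSL(2,3^p)$, $PSL(3,3)$, and $Sz(2^p)$. For each such $G$ I would exhibit a subgroup $M$ (often $G$ itself, or a Sylow subgroup, or a Borel-type subgroup) and then show that no admissible choice of $\Gamma(M)$ can satisfy (ii), by producing for each candidate pair $P_i,P_j\in\Gamma(M)$ an element $x\in M$ with $E(x,P_i,P_j)\neq e(x,P_iP_j)$. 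The key tool is that by Corollary~\ref{eqal} and Lemma~\ref{neq2}(i), when the $P_i$ are Sylow subgroups the quantity $E(x,P_i,P_j)=\gcd(E(x,P_i),E(x,P_j))$ is controlled by $o(x)$ and the orders of the Sylows, whereas $e(x,P_iP_j)$ is computed along the chain $P_iP_j\supseteq P_j$ (or $P_i$) and, by the Remark, when $P_iP_j$ happens to be $G$-invariant or a subgroup, $e(x,P_iP_j)=n(x,P_iP_j)$ — these two numbers will be forced apart by the subgroup structure.

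The main structural input I would use is: take $p=p_1$ (any prime), let $P\in\mathrm{Syl}_p(G)$, and note $\Gamma(P)$ must contain $P$ itself (as $P$ is its only Sylow $p$-subgroup), so condition (ii) applied with $M=P$, $P_i=P_j=P$ becomes $E(x,P)=e(x,P)$ for all $x\in P$; for $x\in P$ this forces $E(x,P)=o(x)$ and $e(x,P)=o(x)$, which is automatic, so the real content comes from pairs of \emph{distinct} primes. Thus fix two distinct prime divisors $p,q$ of $|G|$, pick $M$ containing Sylow $p$- and $q$-subgroups (e.g. a Hall $\{p,q\}$-subgroup when one exists, or $M=G$), and choose $P\in\Gamma(M)\cap\mathrm{Syl}_p(M)$, $Q\in\Gamma(M)\cap\mathrm{Syl}_q(M)$. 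Now $E(x,P,Q)=\gcd(E(x,P),E(x,Q))$, and for $x$ of order $pq$ (a product of commuting elements of orders $p$ and $q$, which exist in the relevant cases — e.g. in a Frobenius complement or a cyclic maximal torus), one computes $E(x,P)$ is a $q$-number and $E(x,Q)$ is a $p$-number, so $E(x,P,Q)=1$; on the other hand $e(x,PQ)$, computed via $e(x,PQ)=e(x,\text{larger})\cdot o(x^{\ldots},\ldots)$ along the chain $1\subseteq Q\subseteq PQ$ (when $PQ$ is a subgroup), will be shown to be divisible by a prime, giving the contradiction. The point to pin down case by case is the existence inside the relevant simple group of an element $x$ and subgroups $P,Q$ making these two computations visibly unequal; the classification in Theorem~\ref{remsol} makes this finite and explicit.

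I expect the main obstacle to be the case analysis itself, and in particular making precise what $e(x,PQ)$ equals when $PQ$ is not a subgroup of $G$ (which is the generic situation for Sylow subgroups of distinct primes in a simple group). For this I would lean on Definition~\ref{def2}: $e(x,PQ)=e(x,M')\cdot o(x^{e(x,M')},PQ)$ where $M'\supseteq PQ$ is chosen conveniently — taking $M'=P Q$ itself reduces $e(x,PQ)=o(x,PQ)=\gcd_g n(x^g,PQ)$, and since $PQ$ as a \emph{set} contains many conjugates of $x$ only when $x$ is suitably placed, the gcd will typically be forced to exceed $1$, contradicting $E(x,P,Q)=1$. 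Concretely, in $PSL(2,q)$ one uses that a unipotent element times a diagonal element of order dividing $q\pm1$ lands in a Borel subgroup, and the orbit structure of such products under conjugation produces an $n(x^g,PQ)$ that is never $1$ for all $g$; in $Sz(2^p)$ and $PSL(3,3)$ the same is arranged using the known maximal subgroups and element orders cited in the proof of Theorem~\ref{sol1}. Assembling these contradictions across the five families of Theorem~\ref{remsol} completes the proof that no minimal non-solvable counterexample exists, hence $G$ is solvable.
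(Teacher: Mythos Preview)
Your high-level architecture matches the paper's: reduce to a minimal counterexample, which is then minimal non-solvable, and eliminate the groups in Theorem~\ref{remsol} case by case using the pair $(E(x,P,Q),\,e(x,PQ))$ for well-chosen Sylow subgroups $P,Q\in\Gamma(G)$. The divergence is in \emph{which direction} you try to violate $E(x,P,Q)=e(x,PQ)$, and that is where the gap lies.

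You aim to produce $x$ with $E(x,P,Q)=1$ but $e(x,PQ)>1$. By Lemma~\ref{neq2}(i) the first condition is exactly $o(x)\mid |P|\,|Q|$; since $e(x,PQ)=o(x,PQ)=\gcd_{g\in G} n(x^g,PQ)$, the second condition says that \emph{no} $G$-conjugate of $x$ lies in the set $PQ$. In the groups of Theorem~\ref{remsol} this is essentially never arrangeable for the reasons implicit in your own examples: if $p,q$ both divide the order of a single torus (or of a Borel), then $PQ$ can be taken inside that subgroup and every element of $\{p,q\}$-order is conjugate into it, forcing $e(x,PQ)=1$. Your concrete suggestion for $PSL(2,q)$ (``a unipotent times a diagonal element of order dividing $q\pm 1$ lands in a Borel'') already breaks here: diagonal elements have order dividing $q-1$, not $q+1$, and products of a unipotent with an element of order dividing $q+1$ are precisely the elements whose order need \emph{not} divide $|P|\,|Q|$.

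The paper exploits exactly this last phenomenon, running the inequality the other way. It fixes $P$ to be the Sylow subgroup for the characteristic and $Q$ a Sylow subgroup for a prime dividing $(q+1)/\gcd(2,q-1)$ (respectively $13$ for $PSL(3,3)$, and a divisor of $q-1$ for $Sz(q)$), shows that every product $P'Q'$ of conjugates is itself a $G$-conjugate of the set $PQ$, and then finds $u\in P^{a},\,v\in Q^{b}$ with $o(uv)\nmid |P|\,|Q|$. Since $uv$ lies in a conjugate of $PQ$ one gets $e(uv,PQ)=1$, while $E(uv,P,Q)=o\big((uv)^{|P||Q|}\big)>1$ by Lemma~\ref{neq2}(i). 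This is the missing idea in your proposal: the contradiction comes from an element \emph{inside} $PQ$ whose order escapes $|P|\,|Q|$, not from a $\{p,q\}$-element that avoids $PQ$.
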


\begin{proof} Let $G$ be defined as above so that it is  of minimal cardinality and unsolvable.
  Let $M$ be a maximal subgroup of $G$.
Clearly, by the minimality of $G$, we have  $M$ is a solvable group.
It follows that $G$ is a minimal non-solvable group. By Theorem \ref{remsol} we consider the following three cases.

{\bf Case 1.} Let $G=PSL(2, q)$, and let $GF(q)$ the finite field with $q$ element. Let $c=gcd(2,q-1)$.
According to a well known theorem of \cite{di}( see chapter 12 pages 260-287), $G$ has a maximal subgroup of order $q(q-1)/c$  such as $W$.

Let $Q\in Syl_e(\langle x \rangle)$  such that $o(x)=q+1/c$ and $e\mid q+1/c$, and let $P\in Syl_{p}(G)\cap \{P_1,...,P_k\}$ such that $|P|=q$.
We may assume that $P\leq W$. By Theorem 3.21 \cite{ab}, $P\lhd W$, and the number of distinct conjugates of $P$ is $q+1$.
By Theorem 3.21 \cite{ab}, $N_G(\langle x \rangle)$   is a dihedral
group of order $q+1$.
Then   $\{P^{b}: b\in N_G(\langle x \rangle)\}$ is the set of all distinct conjugates   of $P$ in $G$. Also, $\{Q^y: y\in W\}$ is the set of all distinct conjugates  of $Q$ in $G$. Let
$\Theta=\{P^{b}Q^y: y\in W, \  b\in N_G(\langle x \rangle)\}$.  We claim that $\Theta\subseteq \{(PQ)^g: g\in G\}$.  Let
$g=by$ where $y\in W$ and $ b\in N_G(\langle x \rangle)$. We have $P^g=P^{b_2}$ for some integer $j$ and   $ b_2\in N_G(\langle x \rangle)$. Hence
$P^{g}Q^{g}=P^{b_2}Q^{y}\in\Theta$, and so  $\Theta\subseteq \{(PQ)^g: g\in G\}$, as claimed.

Let $\Phi=\bigcup_{g\in G}(P Q)^g $, and let  $\Delta=\bigcup_{g\in G}(P\cup Q)^g$.
First, suppose that $\Phi\neq \Delta$. Therefore there are
$u\in P$ and $v\in Q$ such that $(u v)^g\not\in \Delta$ for some $g\in G$.   Then $o(uv)\nmid |P||Q|$. Let $Q^y\in \{P_1,...,P_k\}$.
Since $PQ^y\in \Theta$, there exists $g\in G$ such that $PQ^y=(PQ)^g$.
Then $e(uv, PQ)=e(uv, PQ^y)=1$, and $E(uv, P,Q^y)=o((uv)^{|P||Q|})=o(uv)>1$, which is a contradiction.

So  let  $\Phi=\Delta$.
Clearly, $\Delta\neq G$.
If $\Delta\leq G$, then $\Delta=PQ$ is a normal subgroup, which is a contradiction.
So $\Delta\nleq G$. Then there exists $u\in P^{b}$ where $ b\in N_G(\langle x \rangle)$ and $v\in Q^y$ where $y\in W$,  such that
$uv\not\in \Delta$. It follows that $o(uv)\nmid |P||Q|$. Clearly, $P^{x^i}Q^y\in \Theta$, so there is $g\in G$ such that $P^{x^i}Q^y=(PQ)^g$. Then $e(uv, P^{x^i}Q^y)=e(uv, (PQ)^g)=e(uv, PQ)=1$, and $E(uv, P^{x^i},Q^y)=o((uv)^{|P||Q|})=o(uv)>1$, which is a contradiction.

{\bf Case 2.}
Let $G=PSL(3,3)$. Let $W$ be a subgroup of $G$ of order $432$. The number of distinct conjugates of $W$ in $G$ is equal $13$.   By using GAP software we see that, $o(g)\in \{ 1, 2, 3, 4, 6, 8, 13\}$. We may assume that $P\in Syl_2(W)\cap \{P_1,...,P_k\}$, and let $Q\in Syl_{13}(W)$. There exist $x\in G$ such that $Q^x\in \{P_1,P_2,P_3\}$. Since $|N_G(Q)|=39$, we have $\{Q^g: g\in P\}$ is the set of all conjugates of $Q$.
Then $\{PQ^t: t\in G\}=\{(PQ)^h: h\in Q\}$. By using GAP  software we see that, there are
  $u\in P$ and $v\in Q$, such that $o(uv)\mid 3$.
 Then $e(uv, P Q^x)=1$, and $E(uv, P , Q^x)=o((uv)^{|P||Q|})>1$, which is a contradiction.

{\bf Case 3.} Let $G$ be the Suzuki group $Sz(2^p)$, where $p$ is an odd prime. Let $P_1=P\in Syl_2(G)$. By Theorem 3.10 and 3.11 of \cite{hu}, the set $\Gamma=\{P^g, A^g, B^g, C^g: g\in G\}$
is a
partition of $G $ where $P$ is a Sylow 2-subgroup, $A$ is cyclic of order $q-1$, $B$ is cyclic
of order $q-2\sqrt{q/2}+1$ and $C$ is cyclic of order $q+2\sqrt{q/2}+1$.
Let $W$ be a maximal subgroup of $G$ containing $P$ of order $|P|(q-1)$. It is known that $P\lhd W$. Let $Q\in Syl_e(\langle x \rangle)$  such that $o(x)=q-1$ and $e\mid q-1$.  The rest of the proof is similar to the case 1 with a little change.
\end{proof}

In the spirit of     Theorem \ref{ma2} the following question might be interesting to ponder.
\begin{que}Let $G$  be a finite    group of order $n=p_1^{\alpha_1}p_2^{\alpha_2}...p_k^{\alpha_k}$ where $p_2, ..., p_k$ are primes.
Let $1\leq i<j \leq k$.
Suppose that for all  subgroups $M$ of $G$ there exists a set $\Gamma(M)=\{P_i,P_j\}$ of Sylow subgroups of $M$ such for   $x\in M$, we have
$E(x, P_i,P_j)=e(x,P_iP_j)=e(x,P_jP_i)$.
Is it true that  $G$ has a subgroup of order  $p_i^{\alpha_i}p_j^{\alpha_j}$?

 \end{que}


\bibliographystyle{amsplain}

\end{document}